\errorcontextlines=5
\documentclass[reqno,twoside,final]{amsart}
\usepackage{amsmath,amstext,amsthm,amsfonts,amssymb,amscd}




\newcommand{\Image}{{\operatorname{Im}\ }}
\newcommand{\Char}{{\operatorname{Char}\ }}
\newcommand{\Cent}{{\operatorname{Cent}}}
\newcommand{\tr}{{\operatorname{tr}}}
\newcommand{\diag}{{\operatorname{Diag}}}

\newcommand{\GL}{{\operatorname{GL}}}

\newcommand{\N}{\mathbb{N}}

\theoremstyle{definition}

\newtheorem{rem}{Remark}
\newtheorem*{rem*}{Remark}
\newtheorem*{acknow*}{Acknowledgements}
\newtheorem*{examples*}{Examples}
\newtheorem{examples}{Example}
\theoremstyle{plain}

\newtheorem{prm}{Problem}
\newtheorem{lemma}{Lemma}

\newtheorem{prop}{Proposition}
\newtheorem{theorem}{Theorem}
\newtheorem*{theorem*}{Theorem}
\newtheorem{conjecture}{Conjecture}

\newenvironment{proof-sketch}{\noindent{\bf Sketch of Proof}\hspace*{1em}}{\qed\bigskip}
\newenvironment{proof-idea}{\noindent{\bf Proof Idea}\hspace*{1em}}{\qed\bigskip}
\newenvironment{proof-of-lemma}[1]{\noindent{\bf Proof of Lemma #1}\hspace*{1em}}{\qed\bigskip}
\newenvironment{proof-of-prop}[1]{\noindent{\bf Proof of Proposition #1}\hspace*{1em}}{\qed\bigskip}
\newenvironment{proof-of-thm}[1]{\noindent{\bf Proof of Theorem #1.}\hspace*{1em}}{\qed\bigskip}
\newenvironment{proof-attempt}{\noindent{\bf Proof Attempt}\hspace*{1em}}{\qed\bigskip}

\def\Cent{\operatorname{Cent}}
\def\UD{\operatorname{UD}}
\def\sl{\operatorname{sl}}
\def\chara{\operatorname{char}}
\def\a{\alpha}
\def\la{\lambda}

\pagestyle{myheadings} \markboth{Alexey Kanel-Belov, Sergey Malev,
Louis Rowen}{The images of noncommutative polynomials evaluated on
$2\times 2$ matrices}

\begin{document}

\title[Images of multilinear polynomials ]{The images of multilinear  polynomials evaluated on $3\times 3$ matrices.}
\author{Alexey Kanel-Belov, Sergey Malev, Louis Rowen}

\address{Department of mathematics, Bar Ilan University,
Ramat Gan, Israel} \email{beloval@math.biu.ac.il}
\email {malevs@math.biu.ac.il}
\email{rowen@math.biu.ac.il}
\thanks{This work was supported
by the  Israel  Science Foundation (grant no.
1207/12)}
\thanks{ The second named author was supported by an Israeli Ministry of Immigrant Absorption
scholarship.}

\subjclass[2010]{Primary 16R99, 15A24, 17B60;
Secondary  16R30. }



\keywords{Noncommutative polynomial, image, multilinear, matrices}

\begin{abstract}
Let $p$ be a multilinear polynomial in several noncommuting
variables, with coefficients in a algebraically closed field $K$
of arbitrary characteristic. In this paper we classify the
possible images of $p$ evaluated on $3\times 3$ matrices. The
image is one of the following:
\begin{itemize}
\item \{0\}, \item  the set of scalar matrices, \item a (Zariski)
dense subset of $\sl_3(K)$, the matrices of trace 0,
 \item
a dense subset of $M_3(K)$, \item the set of $3-$scalar matrices
(i.e., matrices having eigenvalues $( \beta , \beta \varepsilon,
\beta \varepsilon^2)$ where $\varepsilon$ is a cube root of 1), or
\item the set of scalars plus $3-$scalar matrices.
\end{itemize}
\end{abstract}


\maketitle

\section{Introduction}
This paper is the continuation of \cite{BMR1}, in which we
considered the question, reputedly raised by Kaplansky, of the
possible image set $\Image p$ of a polynomial $p$ on matrices.

\begin{conjecture}\label{Polynomial image}
If $p$ is a multilinear polynomial evaluated on the matrix ring
$M_n(K)$, then $\Image p$ is either $\{0\}$, $K$ (viewed as $K$
the set of scalar matrices), $\sl_n(K)$, or $M_n(K)$.
\end{conjecture}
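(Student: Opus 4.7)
My plan is to exploit the structural constraints on $\Image p$ coming from multilinearity and $\GL_n$\dash conjugation, reduce the problem to classifying closed irreducible conjugation-invariant cones in $M_n(K)$, and then use the coefficients of the characteristic polynomial to enumerate cases.

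First I would record the basic properties of $S := \Image p$. By multilinearity $p(cA_1,A_2,\ldots,A_k) = c\, p(A_1,\ldots,A_k)$, so $S$ is closed under scalar multiplication by $K$, and the identity
\[
g\, p(A_1,\ldots,A_k)\, g^{-1} = p(gA_1g^{-1},\ldots,gA_kg^{-1})
\]
shows $S$ is closed under $\GL_n(K)$\dash conjugation. As the image of a polynomial map of irreducible varieties $M_n(K)^k \to M_n(K)$, Chevalley's theorem makes $S$ a constructible set, hence Zariski\dash dense in its closure $V$, and $V$ is a closed irreducible conjugation\dash invariant cone.

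Next I would classify such $V$ using invariant theory. The ring $K[M_n]^{\GL_n}$ is generated by the coefficients $e_1(A),\ldots,e_n(A)$ of the characteristic polynomial of $A$, and each composition $e_i\circ p$ is a scalar\dash valued polynomial in the entries of the inputs. If $e_i\circ p\equiv 0$ then $V\subseteq\{e_i=0\}$; otherwise $e_i\circ p$ surjects onto $K$. Combining these vanishing/surjectivity statements with orbit\dash closure arguments (a generic semisimple matrix subject to the surviving invariant constraints must lie in $V$), I would hope to conclude that $V$ is one of $\{0\}$, $K\cdot I$, $\sl_n(K)$, or $M_n(K)$.

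The main obstacle is that for $n\ge 3$ this hope breaks down. For example, in $M_3(K)$ the simultaneous vanishing of $e_1\circ p$ and $e_2\circ p$ without $e_3\circ p\equiv 0$ cuts out precisely the matrices with characteristic polynomial $X^3-\beta^3$ — that is, the $3$\dash scalar matrices of the abstract, since eigenvalues $(\beta,\beta\varepsilon,\beta\varepsilon^2)$ force $e_1=e_2=0$ while $e_3$ remains free. The above framework therefore predicts additional invariant cones that Conjecture~\ref{Polynomial image} does not accommodate, and indeed the abstract lists two further image types. The hard work will be (i) constructing explicit multilinear polynomials realizing the $3$\dash scalar and scalars\dash plus\dash $3$\dash scalar cases, and (ii) showing, conversely, that no further anomalies can occur via a careful Jordan\dash decomposition and orbit\dash closure analysis pinning down exactly which common zero loci of the $e_i\circ p$ can be achieved. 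Step (ii) is the genuine technical difficulty — and is the reason Conjecture~\ref{Polynomial image} as literally stated is inaccurate for $n=3$.
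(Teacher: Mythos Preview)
The statement you are attempting to prove is labelled a \emph{conjecture} in the paper and is not proved there; there is no proof of it to compare against. The paper's actual contribution is Theorem~\ref{main}, which classifies the \emph{possible} images of a multilinear $p$ on $M_3(K)$ and shows that any deviation from the conjectured list must be the $3$\dash scalar locus or the scalars\dash plus\dash $3$\dash scalar locus. Crucially, the paper does \emph{not} exhibit a multilinear $p$ with either exotic image; Problems~\ref{3s} and~\ref{s3s} ask precisely whether such polynomials exist, and the remark following them notes that an affirmative answer would give a counterexample to Conjecture~\ref{Polynomial image}. Your assertion that the conjecture ``as literally stated is inaccurate for $n=3$'' therefore overreads the abstract: the additional image types are candidate obstructions, not confirmed ones.

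Consequently your step (i), ``constructing explicit multilinear polynomials realizing the $3$\dash scalar \dots\ cases'', is not a routine technicality but the open heart of the matter; the only $3$\dash scalar example in the paper arises from an element of $\widetilde{\UD}$ and is homogeneous, not multilinear. Your step (ii) is closer in spirit to what the paper actually accomplishes for $n=3$, but the argument there is far more delicate than an orbit\dash closure analysis on the $e_i$: it relies on Amitsur's generic division algebra (Lemmas~\ref{divAm} and~\ref{div2}) to forbid repeated generic eigenvalues, on evaluations at matrix units together with a cyclic\dash shift construction (Lemma~\ref{diag-not-scal}, Theorem~\ref{dense}, Lemma~\ref{V3inimage}), and on one\dash parameter deformations $a+tb$ (Theorems~\ref{multi_tr=0_3} and~\ref{equation}). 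Finally, even in the favourable cases the paper only obtains that $\Image p$ is \emph{dense} in $\sl_3(K)$ or $M_3(K)$, not equality; so your passage from the closed cone $V$ back to the constructible image $S$ would itself require work that the paper does not supply.
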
 Here $\sl_n(K)$ is the set
of matrices of trace zero.

 This subject was investigated by many
authors (see \cite{AM}, \cite{BK}, \cite{Ch}, \cite{Ku1}, \cite{Ku2},
\cite{LeZh}). For review and basic terminology we refer to our
previous paper \cite{BMR1}. (Connections between images of polynomials on algebras and word equations are
discussed in \cite{BKP}; also see  \cite{La}, \cite{LaS}, \cite{S}.)

 Recall that a polynomial $p$ (written as a sum of monomials) is called {\it
semi-homogeneous of weighted degree $d$} with (integer) {\it
weights} $(w_1,\dots,w_m)$  if for each monomial $h$ of $p$,
taking $d_{j,h}$ to be the degree of $x_{j}$ in $h$, we have
$$d_{1,h}w_1+\dots+d_{n,h}w_n=d.$$ A
semi-homogeneous polynomial with weights $(1,1,\dots, 1)$ is
called $\it{homogeneous}$ of degree $d$.

In  \cite{BMR1} we  settled Conjecture~\ref{Polynomial image} for
$n=2$ and classified the possible images for semi-homogeneous
polynomials:

\begin{theorem}\label{imhom}
Let $p(x_1,\dots,x_m)$ be a semi-homogeneous polynomial
evaluated on the algebra $M_2(K)$ of $2\times 2$ matrices over a
quadratically closed field.
 Then $\Image p$ is either
 $\{0\}$, $K$,  $\sl_2(K)$,  the set of all non-nilpotent matrices in  $\sl_2(K)$,
 or  a
dense subset of $M_2(K)$ (with respect to Zariski topology).
\end{theorem}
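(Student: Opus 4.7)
\begin{proof-sketch}
The plan is to use conjugation- and scaling-invariance of $\Image p$ to constrain its Zariski closure $V:=\overline{\Image p}$, then determine which closures survive and, for each, describe the image precisely.

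Since $p$ is semi-homogeneous with weights $(w_1,\dots,w_m)$ and weighted degree $d$, the substitution $x_i\mapsto\lambda^{w_i}x_i$ rescales $p$ by $\lambda^d$; for $d\ne 0$, because $K$ is quadratically closed, $\Image p$ is $K^*$-scaling-invariant (the exceptional $d=0$ case is handled separately). It is also invariant under simultaneous $GL_2(K)$-conjugation. Hence $V$ is an irreducible, conjugation- and scaling-invariant closed subvariety of $M_2(K)$. Invariant theory ($K[M_2]^{GL_2}=K[\tr,\det]$), combined with the weighted grading $\deg\tr=1$, $\deg\det=2$, enumerates the possibilities: $V$ is one of $\{0\}$, the scalar line $KI$, the nilpotent cone $\{A\in\sl_2(K):A^2=0\}$, a hypersurface $\sl_2(K)=\{\tr=0\}$, $\{\det=0\}$, or $\{\tr^2=c\det\}$ for some $c\in K$, or else $M_2(K)$ itself.

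Let $\tau(\bar x):=\tr p(\bar x)$ and $\delta(\bar x):=\det p(\bar x)$, viewed as polynomial functions on $M_2(K)^m$. The cases $V=\{0\}$ and $V=KI$ correspond (using scaling to upgrade ``closure'' to ``equality'') to $p$ being a polynomial identity and a non-identity central polynomial of $M_2$, giving the first two possibilities. The nilpotent-cone case would force $p^2\equiv 0$ as an identity of $M_2$, contradicting the standard PI-theoretic fact (Amitsur) that $M_2$ carries no nonzero nil polynomial. The hypersurface cases $V=\{\det=0\}$ and $V=\{\tr^2=c\det\}$ are excluded by an explicit substitution: setting $x_1$ to a diagonal matrix and the remaining variables to generic matrices produces $\tau$ and $\delta$ as algebraically independent functions of the specialization parameters (the semi-homogeneous structure of $p$ makes the relevant degree count rigid), contradicting $\delta\equiv 0$ or $\tau^2\equiv c\delta$.

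The surviving possibilities are $V=\sl_2(K)$ and $V=M_2(K)$. If $V=M_2(K)$, then $\Image p$ is dense in $M_2(K)$ by definition. If $V=\sl_2(K)$, then $\tau\equiv 0$ but $\delta\not\equiv 0$; some value $p(\bar x)$ is a non-nilpotent trace-zero matrix, conjugate to $\operatorname{diag}(\mu,-\mu)$ for some $\mu\ne 0$, and conjugation- and scaling-invariance then place every non-nilpotent element of $\sl_2(K)$ into $\Image p$. Whether nilpotent matrices --- in particular $0$ --- are also attained is a finer combinatorial question settled by direct inspection of $p$, yielding the two alternatives $\sl_2(K)$ and $\sl_2(K)\setminus\{\text{nonzero nilpotents}\}$. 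The main obstacle in the whole argument is the uniform exclusion of the one-parameter hypersurface family $\{\tr^2=c\det\}$ (notably the ``double-eigenvalue'' locus $\{\tr^2=4\det\}$); this is where the semi-homogeneous structure of $p$ is used in an essential way to force algebraic independence of $\tau$ and $\delta$.
\end{proof-sketch}
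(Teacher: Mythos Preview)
This theorem is not proved in the present paper; it is quoted from the authors' earlier work \cite{BMR1}, so there is no proof here to compare against directly. That said, the methods this paper uses for the $3\times 3$ analogues (Theorems~\ref{semi_tr0_3} and~\ref{multi_tr=0_3}) are a good proxy for what \cite{BMR1} does, and they differ substantially from your approach. Rather than classifying conjugation- and scaling-invariant closed subvarieties of $M_2(K)$ and then excluding the unwanted ones by ad hoc substitutions, the paper works inside Amitsur's generic division algebra $\widetilde{\UD}$: a relation such as $\tr(p)^2=c\det(p)$ forces the eigenvalue ratio of $p$ to be constant, whence one eigenvalue is central and $p$ minus that eigenvalue is a zero-divisor in $\widetilde{\UD}$, a contradiction unless $p$ is central or trace-vanishing (compare Lemmas~\ref{divAm} and~\ref{div2}). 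Your ``explicit substitution'' paragraph is exactly where this machinery is needed, and as written it does not supply a proof; semi-homogeneity alone does not force $\tau$ and $\delta$ to be algebraically independent (a central polynomial already shows they need not be), and you give no mechanism for ruling out the one-parameter family $\{\tr^2=c\det\}$.

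There is also a genuine error in the scaling step. You assert that quadratic closedness of $K$ makes $\Image p$ invariant under the full $K^*$-action, but the substitution $x_i\mapsto\lambda^{w_i}x_i$ only rescales $p$ by $\lambda^d$, so $\Image p$ is a priori only $(K^*)^d$-invariant; quadratic closedness gives $(K^*)^2=K^*$, not $(K^*)^d=K^*$ for arbitrary $d$. This is harmless for the Zariski closure $V$ (since $(K^*)^d$ is Zariski-dense in $K^*$), but it breaks your final step, where you use scaling to place \emph{every} non-nilpotent element of $\sl_2(K)$ into $\Image p$ itself. The paper's route avoids this by arguing directly with the invariants $\omega_i(p)$ rather than with geometric scaling; see the eigenvalue-surjectivity arguments in Theorems~\ref{semi_tr0_3} and~\ref{multi_tr=0_3}. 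Finally, your closing sentence (``a finer combinatorial question settled by direct inspection of $p$'') is not an argument: deciding whether the nilpotent cone lies in $\Image p$ is precisely the delicate point, and the two listed outcomes in the theorem are not distinguished by any inspection you have described.
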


 A
 homogeneous polynomial  $p$ is
called $\it{multilinear}$  if $d_{j,h}=1$ for each $1 \le j \le n$
and each monomial $h$ of $p$ (and thus $d = n$).

Examples were given in \cite{BMR1} of homogeneous (but not
multilinear) polynomials whose images do not belong to the
  classification of Theorem 1.

Our research in this paper continues for the $3 \times 3$ case,
yielding the following:
\begin{theorem}\label{main}
If $p$ is a multilinear polynomial evaluated on $3\times 3$
matrices
then $\Image p$ is one of the
following:
\begin{itemize}
\item \{0\},
\item  the set of scalar matrices, \item
 $\sl_3(K)$,  (perhaps lacking the diagonalizable matrices of discriminant
 $0$), cf.~Remark~\ref{rem_sl3}.
 \item
a dense subset of $M_3(K)$, \item the set of $3-$scalar matrices,
or \item  the set of scalars plus $3-$scalar matrices.
\end{itemize}
\end{theorem}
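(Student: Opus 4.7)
The plan is to exploit two structural properties that $\Image p$ inherits automatically from $p$ being multilinear. First, $\Image p$ is invariant under $\GL_3(K)$\dash conjugation, since conjugating all variables by $g\in\GL_3(K)$ (or equivalently absorbing $g$ into a single variable slot) conjugates the output. Second, $\Image p$ is a cone, because multilinearity lets one pull a scalar through any single variable. Hence $\Image p$ is a union of similarity classes forming a cone, and the classification reduces to controlling which similarity classes appear. I will track this by the three fundamental $\GL_3$-invariant polynomial functions
\[
T(x)=\tr p(x),\qquad S(x)=e_2(\text{eigenvalues of }p(x)),\qquad D(x)=\det p(x)
\]
on $M_3(K)^m$, and branch the argument on whether each vanishes identically.

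If $T\not\equiv 0$, the expected dichotomy is that either $p$ is central (giving $\Image p=K\cdot I$) or I can produce a substitution at which $p$ has three distinct eigenvalues; Zariski density of the regular semisimple locus, together with the cone and orbit structure, then forces $\Image p$ to be dense in $M_3(K)$. This is the easier half and borrows standard PI-theoretic density arguments.

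The deeper case is $T\equiv 0$, which constrains $\Image p \subseteq \sl_3(K)$. The key algebraic input is the elementary identity: if the eigenvalues $a,b,c$ of a matrix satisfy $a+b+c=0$ and $ab+bc+ca=0$, then $a^2=bc$, $b^2=ac$, $c^2=ab$, so $a^3=b^3=c^3=abc$; consequently the matrix is nilpotent when $abc=0$ and a $3$-scalar matrix otherwise. Thus if $T\equiv S\equiv 0$ then every value of $p$ is nilpotent or $3$-scalar, and I would split into subcases: when $D\equiv 0$ the polynomial $p$ is always nilpotent and a PI-theoretic argument (no non-trivial multilinear nil polynomial on $M_3$) collapses the image to $\{0\}$; when $D\not\equiv 0$, the image is exactly the cone of $3$-scalar matrices. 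When $T\equiv 0$ but $S\not\equiv 0$, I would exhibit a regular semisimple traceless value of $p$, and the orbit-cone argument gives density in $\sl_3(K)$, with the fine question of which discriminant-zero orbits can be missing addressed in Remark~\ref{rem_sl3}.

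The mixed image (scalars together with $3$-scalar matrices) is the subtlest regime. It reflects a decomposition of $p$, modulo the T-ideal of $M_3(K)$, into a central piece and a $3$-scalar-valued piece, in such a way that on every substitution exactly one of the two pieces is nonzero. I expect the main obstacles to be: first, establishing rigorously that the ``always nilpotent'' subcase collapses to $\Image p=\{0\}$ (a nontrivial PI fact for $M_3$); second, a delicate orbit-by-orbit analysis of $\sl_3(K)$ to pin down precisely which discriminant-zero similarity classes may be excluded from the image when $T\equiv 0,\ S\not\equiv 0$; and third, ruling out any further hybrid images beyond the six listed, which requires exploiting the rigidity of multilinear polynomials modulo the PIs of $M_3(K)$ to show no other constraint loci can arise.
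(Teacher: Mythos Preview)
Your branching on $T=\tr p$ is where the plan breaks. You assert that if $T\not\equiv 0$ then either $p$ is central or $\Image p$ is dense in $M_3(K)$, but the ``scalars plus $3$-scalar'' image has eigenvalues $(\alpha+\beta,\alpha+\beta\varepsilon,\alpha+\beta\varepsilon^2)$, hence trace $3\alpha$, so $T\not\equiv 0$ there while the image sits in the $8$-dimensional hypersurface $\omega_1(p)^2=3\,\omega_2(p)$. Thus your dichotomy in the $T\not\equiv 0$ case is simply false, and your later description of this mixed case as a decomposition into two pieces ``exactly one of which is nonzero on every substitution'' is also off: it is a genuine sum, not a disjoint union of scalar and $3$-scalar values. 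The paper handles this case not by tracking $(T,S,D)$ separately but by proving (Theorem~\ref{equation}) that unless $\omega_1(p)^2$ and $\omega_2(p)$ satisfy one of two specific linear relations, $\Image p$ already contains a Zariski-dense family of non-diagonalizable matrices; the argument runs along a line $a+tb$ in $\Image p$ and uses repeatedly that a uniquely determined root lands in Amitsur's generic division algebra $\widetilde{\UD}$, where Lemmas~\ref{divAm} and~\ref{div2} forbid repeated eigenvalues.

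There is a second gap in the $T\equiv 0$, $S\not\equiv 0$ branch. Exhibiting one regular semisimple traceless value, together with the cone and conjugation invariance, only produces a $7$-dimensional family (the scalings of a single orbit); it does not give density in the $8$-dimensional $\sl_3(K)$. What is needed is that the invariant ratio $\omega_2(p)^3/\omega_3(p)^2$ hits a dense set of values, which is the content of Lemma~\ref{extr} and Theorem~\ref{multi_tr=0_3}; the paper again proves this by a line argument $a+tb$ and Amitsur-type uniqueness, not by an abstract orbit-cone count. More broadly, the paper's proof is driven by two tools you do not invoke at all: substitutions by matrix units combined with the cyclic permutation~$\chi$ (Theorem~\ref{dense}, Lemma~\ref{V3inimage}), and the structure of $\widetilde{\UD}$ to rule out degenerate eigenvalue configurations. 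Without these, the steps you flag as ``expected'' (no nontrivial nil multilinear polynomial, density from a single good value, ruling out further hybrid images) remain unproved assertions.
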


\section{Images of Polynomials}
For any polynomial $p\in K\langle x_1,\dots,x_m\rangle$, the
$image$ of $p$ (in $R$) is defined as
$$\Image p=\{r\in R:
\ \text{there exist}\ a_1,\dots,a_m\in R\ \text{such that}\
p(a_1,\dots,a_m)=r \}.$$

\begin{rem}\label{cong}
$\Image p$ is invariant under conjugation, since $$a
p(x_1,\dots,x_m)a^{-1}=p(a x_1a^{-1},a x_2a^{-1},\dots,a
x_ma^{-1})\in\Image p,$$ for any nonsingular $a\in M_n(K)$.
\end{rem}

We recall the following lemmas (for arbitrary $n$) proved in
\cite{BMR1}:

\begin{lemma}[{\cite[Lemma 4]{BMR1}}]\label{graph}
If $a_i$ are matrix units, then   $p(a_1,\dots,a_m)$ is either~$0$, or~$c\cdot e_{ij}$ for some $i\neq j$, or a diagonal matrix.
\end{lemma}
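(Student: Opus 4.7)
The plan is to translate the question into a combinatorial statement about Eulerian trails in a directed multigraph. Write $a_k = e_{i_k j_k}$ and let $G$ be the directed multigraph on the vertex set $\{1,2,3\}$ with one edge $i_k \to j_k$ for each index $k$. The key observation is that a product of matrix units $e_{p_1 q_1} e_{p_2 q_2} \cdots e_{p_m q_m}$ is nonzero if and only if $q_l = p_{l+1}$ for all $l < m$, and in that case it equals $e_{p_1, q_m}$. Applied to a monomial $c\cdot x_{\sigma(1)} \cdots x_{\sigma(m)}$ of $p$ evaluated at the $a_k$, this shows that the monomial contributes $c \cdot e_{i_{\sigma(1)}, j_{\sigma(m)}}$ precisely when $\sigma$ orders the edges of $G$ into an Eulerian trail (a walk traversing every edge exactly once), and contributes $0$ otherwise.

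Next I would invoke the standard Eulerian-trail dichotomy on $G$. If $G$ admits no Eulerian trail then every monomial vanishes and $p(a_1,\dots,a_m)=0$. Otherwise, either every vertex of $G$ has in-degree equal to out-degree---in which case every Eulerian trail is a closed circuit---or there is a unique ordered pair of distinct vertices $(s,t)$, determined by the degree imbalances, such that every Eulerian trail starts at $s$ and ends at $t$. The second case forces every nonzero monomial contribution to be a scalar multiple of the single off-diagonal unit $e_{s,t}$, so $p(a_1,\dots,a_m)$ equals $c\cdot e_{s,t}$ with $s\neq t$, as required.

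In the first case the Eulerian circuits may begin at any vertex supporting $G$, so the individual monomial contributions can involve more than one of $e_{11}$, $e_{22}$, $e_{33}$; but every such contribution is a diagonal idempotent, whence the sum is diagonal. This exhausts the three alternatives in the statement. The one step that needs slight care is the uniqueness of the endpoint pair $(s,t)$ in the non-closed case, which follows from the classical parity argument on in- and out-degrees; the remainder is bookkeeping translating noncommutative products into walks in a three-vertex multigraph, and I expect no genuine obstacle.
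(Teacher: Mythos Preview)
Your argument is correct and is precisely the intended one: the paper does not reprove this lemma here but imports it from \cite{BMR1}, and the label \texttt{graph} together with the later use in the proof of Theorem~\ref{dense} (where ``our graph,'' vertices, and in/out-degrees are invoked) make clear that the original proof is exactly this Eulerian-trail analysis on the multigraph of matrix units. One cosmetic point: the lemma is stated for $M_n(K)$ with arbitrary $n$, so your vertex set should be $\{1,\dots,n\}$ rather than $\{1,2,3\}$; nothing in your argument depends on $n=3$.
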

\begin{lemma}[{\cite[Lemma 5]{BMR1}}]\label{linear}
The linear span of   $\Image p$ is either $\{0\}$, $K$,
$\sl_n$, or $M_n(K)$. If $\Image p$ is not $\{0\}$ or the set of scalar matrices, then
for any $i\neq j$ the matrix unit $e_{ij}$ belongs to $\Image p$.
\end{lemma}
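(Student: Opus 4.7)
The plan is to exploit the conjugation invariance from Remark \ref{cong}: writing $V$ for the $K$-linear span of $\Image p$ in $M_n(K)$, we see that $V$ is stable under the adjoint action of $GL_n(K)$. The first assertion then reduces to classifying the $GL_n$-stable subspaces of $M_n(K)$. I would argue that the only such subspaces form the chain $\{0\} \subset K \cdot I_n \subset \sl_n(K) \subset M_n(K)$: the trace map furnishes a quotient $M_n/\sl_n \cong K$ carrying the trivial representation, while $\sl_n/(K \cdot I_n \cap \sl_n)$ is the adjoint module of $\mathfrak{psl}_n$, which is irreducible over an algebraically closed field. In characteristic coprime to $n$ one has the direct sum decomposition $M_n = K \cdot I_n \oplus \sl_n$; when $\chara K \mid n$, the short exact sequence $0 \to K \cdot I_n \to \sl_n \to \sl_n/K \cdot I_n \to 0$ is non-split, but no additional submodules arise, so the displayed chain is exhaustive in every characteristic.

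For the second assertion, assume $\Image p$ is neither $\{0\}$ nor the set of scalar matrices; by the first part, $V$ is $\sl_n$ or $M_n$, and in particular every off-diagonal matrix unit $e_{ij}$ lies in $V$. Multilinearity now plays the central role: expanding each $a_k = \sum_{s,t} \gamma^{(k)}_{st} e_{st}$ gives
\[
p(a_1,\dots,a_m) \;=\; \sum_{\vec{s},\vec{t}} \Bigl(\prod_k \gamma^{(k)}_{s_k t_k}\Bigr)\, p(e_{s_1 t_1},\dots,e_{s_m t_m}),
\]
so $V$ is spanned by the pure matrix-unit evaluations $p(e_{s_1 t_1},\dots,e_{s_m t_m})$. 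By Lemma \ref{graph}, each such evaluation is either $0$, a diagonal matrix, or a scalar multiple $c \cdot e_{kl}$ for one specific off-diagonal pair $(k,l)$. Fixing $(i,j) = (1,2)$ and invoking the direct sum decomposition $M_n = D \oplus \bigoplus_{k \neq l} K \cdot e_{kl}$ (where $D$ denotes the diagonal subalgebra), the hypothesis $e_{12} \in V$ forces the existence of a matrix-unit substitution whose value is $c \cdot e_{12}$ with $c \neq 0$: no diagonal contribution and no other $e_{kl}$-contribution can produce an $e_{12}$-component. Replacing one argument of that substitution by its $c^{-1}$-multiple and invoking multilinearity yields $e_{12} \in \Image p$. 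For the remaining pairs, conjugating by the permutation matrix sending $(1,2)$ to $(i,j)$ and applying Remark \ref{cong} produces $e_{ij} \in \Image p$ for every $i \neq j$.

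The main obstacle is the submodule classification when $\chara K$ divides $n$: here $K \cdot I_n \subseteq \sl_n$ and the composition series fails to split, so one must check that no $GL_n$-invariant subspace transverse to $K \cdot I_n$ inside $\sl_n$ exists, and analogously that no such transverse submodule exists between $\sl_n$ and $M_n$. This follows from the irreducibility of $\mathfrak{psl}_n$ as a $GL_n$-module and the fact that the trivial module has a one-dimensional isotypic component in $M_n$, but phrasing it uniformly across all characteristics requires a little care. The rest of the argument is essentially bookkeeping, leveraging the coordinate disjointness supplied by Lemma \ref{graph} to extract an honest off-diagonal matrix unit from the mere presence of $e_{12}$ in the span.
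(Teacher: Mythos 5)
The paper offers no proof of this lemma --- it is quoted from \cite[Lemma 5]{BMR1} --- so I can only measure your argument against the standard one, which it essentially reproduces: conjugation-invariance of the span plus the classification of $\GL_n(K)$-stable subspaces of $M_n(K)$ for the first assertion, and multilinearity, Lemma~\ref{graph}, rescaling of a single variable, and conjugation by permutation matrices for the second. Your second part is complete and correct; in particular the observation that the $e_{12}$-component of a span of ``purely diagonal or purely $e_{kl}$'' generators can only come from a generator proportional to $e_{12}$ is exactly the right use of Lemma~\ref{graph}. The one under-justified step is the submodule classification when $\chara K\mid n$: there the trivial module occurs \emph{twice} among the composition factors of $M_n(K)$ (as $K\cdot I\subseteq\sl_n$ and as $M_n/\sl_n$), so irreducibility of $\sl_n/K\cdot I$ together with ``a one-dimensional trivial isotypic component'' does not by itself exclude a two-dimensional invariant subspace $W$ with $K\cdot I\subsetneq W$ and $W/K\cdot I$ trivial; you still must check that $gxg^{-1}-x\in K\cdot I$ for all $g$ forces $x$ scalar (the map $g\mapsto gxg^{-1}-x$ would give an algebraic additive character of $\GL_n$, necessarily zero). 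Alternatively, the whole classification can be done elementarily, closer in spirit to how \cite{BMR1} exploits matrix units: if the span contains a non-scalar $x$, conjugating by $I+te_{ij}$ for two distinct nonzero values of $t$ and solving the resulting linear system puts $[e_{ij},x]$ and $x_{ji}e_{ij}$ in the span, whence some $e_{ij}$, whence all $e_{kl}$ and all $e_{ii}-e_{jj}$, whence $\sl_n$; this avoids any appeal to modular representation theory. With that point repaired, your proof is correct.
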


Another major tool is  Amitsur's Theorem \cite[Theorem 3.2.6,
p.~176]{Row}, that the algebra of generic $n\times n$ matrices
(generated by matrices $Y_k = (\xi _{i,j}^{(k)})$ whose entries
$\{ \xi _{i,j}^{(k)}, 1 \le i,j \le n\}$ are commuting
indeterminates) is a non-commutative domain $\UD$ whose ring of
fractions with respect to the center is a division algebra which
we denote as $\widetilde{\UD}$ of dimension $n^2$ over its center
$F_1: = \Cent(\widetilde{\UD}))$.

\begin{rem}\label{patch1} Suppose $t$ is a commuting indeterminate, and $f(x_1,
\dots, x_m;t)$ is a polynomial taking values under matrix
substitutions for the~$x_i$ and scalars for $t.$  Suppose that there exists
unique $t_0$  such that $f(x_1, \dots, x_m;t_0)=0$.

In case  $\Char (K)=0$.    $t_0$ is
a rational function with respect to the entries of $x_i$.
If this
$t_0$ is fixed under  simultaneous conjugation of generic matrices
$x_1,\dots ,  x_m$, then $t_0$ is in the center of Amitsur's
division algebra ~$\widetilde{\UD}$, implying $f \in
\widetilde{\UD}$.

In case $\Char(K)=q\neq 0$, then $t_0^{q^l}$ is a
rational function for some $l\in\mathbb{N}_0$.
\end{rem}
\begin{rem}
In Remark \ref{patch1} we could take a system of polynomial
equations and polynomial inequalities. If $t_0$ is unique, then it
is a rational function (or $t_0^{q^l}$ if $\Char(K)=q>0$).
\end{rem}

In fact, we need a slight modification of Amitsur's theorem, which
is well known. Viewing    $$\widetilde{\UD} \subseteq M_n\left(
F(\xi _{i,j}^{(k)}): 1\le i,j \le n,\ k\ge 1\right)$$ we can
define the reduced characteristic coefficients of elements of
$\widetilde{\UD}$, which by \cite[Remark~24.67]{Row2} lie in
$F_1$.

\begin{lemma}\label{divAm}  Suppose that an element $a$ of $\widetilde{\UD}$ has
a unique eigenvalue~$\a$ (i.e., of multiplicity $n$). If $\Char(K)=0$,  then $a$ is
scalar. If $\Char(K)=q>0$, then $q|n$ and $a$ is $q^l-$scalar for some
$l$.
\end{lemma}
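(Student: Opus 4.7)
The plan is to extract information from the reduced characteristic coefficients of $a$, which by the preceding remark lie in $F_1 = \Cent(\widetilde{\UD})$. Since $a$ has unique eigenvalue $\alpha$ (a priori in the algebraic closure of $F_1$) of multiplicity $n$, its characteristic polynomial is $(x-\alpha)^n$, so the coefficient of $x^{n-k}$ equals $(-1)^k\binom{n}{k}\alpha^k$, and these $n$ quantities all lie in $F_1$.

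The characteristic zero case will be quick: taking $k=1$ forces $n\alpha \in F_1$, hence $\alpha \in F_1 \subseteq \widetilde{\UD}$, and then $a - \alpha \in \widetilde{\UD}$ satisfies $(a-\alpha)^n = 0$. Because $\widetilde{\UD}$ is a division algebra and hence free of nonzero nilpotents, this forces $a = \alpha$ to be scalar.

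In characteristic $q>0$ I would let $q^l$ be the largest power of $q$ dividing $n$; Lucas's theorem ensures $\binom{n}{q^l} \not\equiv 0 \pmod{q}$, so $\alpha^{q^l} \in F_1$. The Frobenius identity in characteristic $q$ gives $a^{q^l} - \alpha^{q^l} = (a - \alpha)^{q^l}$ in the ambient matrix algebra, and this is nilpotent since $(a - \alpha)^n = 0$; moreover $a^{q^l} - \alpha^{q^l}$ actually lies in $\widetilde{\UD}$, so the absence of nonzero nilpotents there forces $a^{q^l} = \alpha^{q^l}$, i.e., $a$ is $q^l$-scalar. If $q \nmid n$, then $l = 0$ and the same reasoning as in characteristic zero makes $a$ scalar, so whenever $a$ is non-scalar one must have $q \mid n$.

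The main subtlety I anticipate is the recognition that, although $\alpha$ itself need not lie in $F_1$ in positive characteristic, $\alpha^{q^l}$ always does; this is what accounts for the weaker conclusion "$q^l$-scalar" instead of scalar, and is the only place where the hypothesis on $\Char(K)$ actually intervenes.
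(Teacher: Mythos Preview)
Your proof is correct and follows essentially the same approach as the paper's: show that $\alpha$ (in characteristic $0$) or $\alpha^{q^l}$ (in characteristic $q$) lies in $F_1$, then observe that $a-\alpha$ (respectively $a^{q^l}-\alpha^{q^l}$) is a nilpotent element of the division algebra $\widetilde{\UD}$ and hence vanishes. Your appeal to Lucas's theorem to pin down the exponent $l$ makes explicit a step the paper leaves to the preceding remarks on characteristic coefficients and Remark~\ref{patch1}.
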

\begin{proof} If $\Char(K)=0,$ then $\a$ is the element of $\widetilde{\UD}$ and $a -\a I$ is nilpotent, and thus
$0.$

If $\Char(K)=q$ then $\a^{q^l}$ is an element of
$\widetilde{\UD}$; therefore $a^{q^l} -\a^{q^l} I$ is nilpotent,
and thus $0,$ implying $a$ is $q^l-$scalar. This is impossible if $q$
does not divide the size of the matrices $n$.
\end{proof}

\begin{lemma}\label{div2} The multiplicity of any eigenvalue of an element $a$ of
$\widetilde{\UD}$ must divide~$n$. In particular, when $n$ is odd,
$a$ cannot have an eigenvalue  of multiplicity 2.
\end{lemma}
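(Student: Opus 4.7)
The plan is to apply the standard theory of central simple algebras to $\widetilde{\UD}$. For any $a\in\widetilde{\UD}$ the commutative subring $F_1[a]$ lies in a division algebra and is therefore a subfield of $\widetilde{\UD}$. Setting $d=[F_1[a]:F_1]$, the classical fact that every subfield of a central division algebra of degree $n$ has degree dividing $n$ (an immediate consequence of the double centralizer theorem, since $[C_D(L):L]\cdot[L:F_1]^2=n^2$) yields $d\mid n$. Consequently the minimal polynomial $m_a(x)\in F_1[x]$ of $a$ over $F_1$ is irreducible of degree $d$.

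Next I would show that the reduced characteristic polynomial $\chi_a(x)\in F_1[x]$, which has degree $n$ and whose roots in $\overline{F_1}$ are precisely the eigenvalues of $a$ after the splitting $\widetilde{\UD}\otimes_{F_1}\overline{F_1}\cong M_n(\overline{F_1})$, is exactly the $(n/d)$-th power of $m_a$. The key observation is that $m_a\mid\chi_a$ by Cayley--Hamilton, and conversely every eigenvalue of $a$ is a root of $m_a$ (since $m_a(a)=0$ forces $m_a(\alpha)=0$ on any eigenvector). Since $m_a$ is irreducible in $F_1[x]$, every irreducible factor of $\chi_a\in F_1[x]$ must coincide with $m_a$; counting degrees forces $\chi_a=m_a^{n/d}$.

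From this factorization the multiplicity of any eigenvalue $\alpha$ is read off directly. If $\Char(K)=0$, or more generally if $m_a$ is separable, then each root of $m_a$ is simple, so $\alpha$ occurs with multiplicity exactly $n/d$ in $\chi_a$, which divides $n$. If $\Char(K)=q>0$ and $m_a$ is inseparable, write $m_a(x)=h(x^{q^e})$ with $h\in F_1[x]$ separable irreducible, so that each root of $m_a$ has multiplicity $q^e$ in $m_a$ and hence multiplicity $q^e\cdot(n/d)=n/\deg h$ in $\chi_a$, which again divides $n$. The second assertion is then immediate: for $n$ odd, $2\nmid n$, so no eigenvalue can have multiplicity $2$.

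The main delicate point I foresee is the identification $\chi_a=m_a^{n/d}$. It is standard but relies crucially on $\widetilde{\UD}$ being a division algebra rather than merely a central simple algebra; indeed the analogous statement already fails in $M_n(F_1)$ for matrices with distinct eigenvalues of differing multiplicities, which is exactly the phenomenon the lemma is designed to exclude inside $\widetilde{\UD}$.
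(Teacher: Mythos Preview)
Your proposal is correct and follows essentially the same approach as the paper: the paper's proof consists of a single-line citation to \cite[Remark~4.106]{Row1} for the fact that the eigenvalues of an element of a division algebra, viewed as a matrix, all occur with the same multiplicity (hence that common multiplicity divides $n$), and you have simply unpacked that citation by proving $\chi_a = m_a^{n/d}$ via the double centralizer theorem and irreducibility of $m_a$. Your treatment of the inseparable case and your closing remark that the argument genuinely requires $\widetilde{\UD}$ to be a division algebra are both on point.
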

\begin{proof} Recall \cite[Remark~4.106]{Row1}
 that for any element $a$ in a division
algebra, represented as a matrix, the eigenvalues of $a$ occur
with the same multiplicity, which thus must divide $n$.
\end{proof}

\begin{prop}\label{Am10} Suppose we have a homomorphism $\varphi:\widetilde{\UD} \to A$
given by the specialization $\varphi(Y_k) = a_k.$ Then any
characteristic coefficient of $ Y_k $ in $\widetilde{\UD}$
specializes to the corresponding characteristic coefficient of
$a_k.$
\end{prop}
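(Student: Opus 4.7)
The approach will be to invoke the Cayley--Hamilton identity inside $\widetilde{\UD}$ and then transport it via $\varphi$, relying on the fact that the characteristic coefficients are \emph{universal} polynomial expressions in $Y_k$. Since, by the remark from [Row2] cited just above Lemma~\ref{divAm}, each $c_i(Y_k)$ already lies in the center $F_1 \subset \widetilde{\UD}$, it is an honest element on which the ring homomorphism $\varphi$ acts.

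First I would record the Cayley--Hamilton identity for $Y_k$ in $\widetilde{\UD}$,
\begin{equation*}
Y_k^n - c_1(Y_k)\, Y_k^{n-1} + c_2(Y_k)\, Y_k^{n-2} - \dots + (-1)^n c_n(Y_k) = 0,
\end{equation*}
which holds because it already holds in the overring $M_n(F(\xi_{i,j}^{(k)}))$. Applying $\varphi$ gives the analogous identity with $a_k$ in place of $Y_k$ and $\varphi(c_i(Y_k))$ in place of $c_i(Y_k)$, showing that $a_k$ is a root of a monic degree-$n$ polynomial whose coefficients are $\varphi(c_i(Y_k))$.

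To finish, I would identify $\varphi(c_i(Y_k))$ with $c_i(a_k)$. The key fact is that each $c_i$ is expressible as a \emph{trace polynomial} in its argument (Razmyslov--Procesi theory): the reduced traces $\tr(Y_k^j)$ for $1 \le j \le n$ are central noncommutative polynomial expressions in $Y_k$, and each $c_i(Y_k)$ is recovered from these traces via Newton's identities. Since $\varphi$ is a ring homomorphism and thus commutes with evaluation of noncommutative polynomial expressions in $Y_k$, it follows that $\varphi(c_i(Y_k)) = c_i(a_k)$.

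The main obstacle lies in positive characteristic, where Newton's identities involve denominators that may vanish modulo $p$. I would circumvent this by passing to the integral form of the Cayley--Hamilton identities (as in Procesi's treatment): these are polynomial identities valid universally at the level of the generic matrix ring, and therefore specialize cleanly under any ring homomorphism, irrespective of the characteristic of the base field.
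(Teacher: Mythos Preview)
Your argument has a genuine gap at the step where you assert that ``the reduced traces $\tr(Y_k^j)$ \dots\ are central noncommutative polynomial expressions in $Y_k$.'' This is false: no noncommutative polynomial $q(x)$ in a single variable satisfies $q(x)=\tr(x)\cdot I$ on $M_n(K)$. (For instance, on scalar matrices $\lambda I$ such a $q$ would have to be $n\lambda$, forcing $q(x)=nx$; but $n\,e_{12}\ne \tr(e_{12})\cdot I=0$.) The traces are central elements of $\widetilde{\UD}$, but centrality does not make them polynomial in $Y_k$, so the sentence ``$\varphi$ commutes with evaluation of noncommutative polynomial expressions in $Y_k$'' does not apply to them. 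Your Cayley--Hamilton step is correct and shows that $a_k$ annihilates \emph{some} monic polynomial of degree $n$ with coefficients $\varphi(c_i(Y_k))$, but when the minimal polynomial of $a_k$ has degree less than $n$ this does not pin down the coefficients, so steps 1--2 alone are insufficient. Your final paragraph about an ``integral form'' does not repair the problem: the universal Cayley--Hamilton identity still has the $c_i$ sitting as central coefficients, not as polynomials in $Y_k$, so specializing it reproduces exactly the ambiguity just described.

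The paper closes this gap differently. It invokes the alternating--polynomial formula (Theorem~J of \cite{BR}): for a $t$-alternating nonidentity $f$ one has the \emph{polynomial} identity
\[
\omega_\ell(T)\,f(a_1,\dots,a_t,r_1,\dots,r_m)=\sum f(T^{\ell_1}a_1,\dots,T^{\ell_t}a_t,r_1,\dots,r_m),
\]
so each $\omega_\ell(Y_k)$ is a ratio of two genuine noncommutative polynomial values in $Y_k$ together with \emph{auxiliary} generic matrices. Both numerator and denominator are then carried verbatim by $\varphi$, and choosing the auxiliary specializations so that $f\ne 0$ yields $\varphi(\omega_\ell(Y_k))=\omega_\ell(a_k)$. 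The essential idea you are missing is precisely this use of extra variables to realize the characteristic coefficients as ratios of polynomial evaluations; Razmyslov--Procesi theory does provide such formulas, but not one-variable ones.
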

\begin{proof} Let $t:=n^2$. Any characteristic coefficient of an element of $\widetilde{\UD}$ can be expressed as the ratio of two
central polynomials, in view of \cite[Theorem 1.4.12]{Row}; also
see \cite[Theorem~J, p.~27]{BR} which says that  for any
$t-$alternating polynomial nonidentity $f$, and for any
characteristic coefficient $\omega_\ell  $ of the characteristic
polynomial  $\lambda^t + \sum_{\ell =1}^t (-1)^\ell  \omega _\ell
\lambda ^{t-\ell }$ of a linear transformation $T$ of the
$t$-dimensional vector space corresponding to $n \times n$
matrices,  we have
\begin{equation}\label{trace2pol0}
\omega_\ell(T)  f(a_1, \dots, a_t, r_1, \dots, r_m) = \sum
 f(T^{\ell
_1}a_1, \dots, T^{\ell _t} a_t,  r_1, \dots, r_m) ,
\end{equation}
summed over all vectors $(\ell _1, \dots, \ell _t)$ where each
$\ell _i \in \{ 0, 1 \}$ and $\sum \ell _i = l.$  Hence, taking $ f(a_1, \dots, a_t, r_1, \dots, r_m)\ne
0$, the
characteristic coefficient of a polynomial evaluated on $A$ is obtained according
to the specialization from $\widetilde{\UD}$ induced from~
$\varphi$.
\end{proof}

We recall Donkin's theorem:
\begin{theorem}[Donkin~\cite{D}]\label{Donkin}
For any $m, n\in \N$, the algebra of polynomial invariants
$K[M_n(K)^m]^{\GL_n(K)}$ under $\GL_n(K)$  is generated by the
trace functions
\begin{equation}\label{Donk} T_{i,j}(x_1,x_2,\dots,x_m) =
\operatorname{Trace}(x_{i_1}x_{i_2}\cdots
x_{i_r},\bigwedge\nolimits^jK^n),\end{equation} where
$i=(i_1,\dots,i_r),$ all $i_l\leq m,$ $r\in\N, j>0,$ and
$x_{i_1}x_{i_2}\cdots x_{i_r}$ act  as linear transformations on
the exterior algebra $\bigwedge^jK^n$.
\end{theorem}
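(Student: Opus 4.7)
The plan is to leverage the representation theory of $\GL_n$ in positive characteristic, since in characteristic zero the result with plain traces as generators is the classical Sibirskii--Procesi theorem. First I would verify the easy inclusion: each $T_{i,j}$ is invariant because conjugation of a product $x_{i_1}\cdots x_{i_r}$ by $g\in \GL_n(K)$ induces conjugation of $\bigwedge^j(x_{i_1}\cdots x_{i_r})$ by $\bigwedge^j g$, and the trace of an endomorphism is conjugation invariant. So $K[T_{i,j}] \subseteq K[M_n(K)^m]^{\GL_n(K)}$; the real content is the reverse inclusion.

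For the reverse inclusion, I would realize $K[M_n(K)]$ as a $\GL_n \times \GL_n$-bimodule under left and right multiplication, and invoke the classical fact that it admits a good filtration whose sections are $\nabla(\lambda)\otimes \nabla(\lambda)^\ast$ indexed by polynomial partitions $\lambda$ of length at most $n$. Consequently, $K[M_n(K)^m] \cong K[M_n(K)]^{\otimes m}$ under the diagonal conjugation action of $\GL_n$ is a tensor product of modules admitting good filtrations. I would then invoke Mathieu's theorem (originally conjectured by Donkin) that the tensor product of modules with good filtration again has good filtration, obtaining a good filtration on $K[M_n(K)^m]$ with sections of the form $\nabla(\mu)$.

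The third step is to identify the invariant subring section-by-section. For a module with good filtration the invariants are transparent: $\dim V^{\GL_n}$ equals the multiplicity of $\nabla(0)$ among the sections, and these trivial sections are detected by highest weight vectors. A bideterminant/straightening calculation in the style of the standard monomial theory for $\GL_n$ shows that the highest weight vectors responsible for the trivial sections are precisely built from the exterior-power trace functions $T_{i,j}$, and moreover that polynomials in these functions span the full invariant ring. This establishes surjectivity of the natural map $K[T_{i,j}] \to K[M_n(K)^m]^{\GL_n(K)}$.

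The main obstacle is Mathieu's theorem on the preservation of good filtrations under tensor products, which is genuinely deep and uses the full strength of the theory of reductive groups in positive characteristic. A more elementary attack via transfer from characteristic zero (tracking the breakdown of Newton's identities modulo $p$, so that higher elementary symmetric functions of eigenvalues must be replaced by the $T_{i,j}$) can sidestep Mathieu's theorem in principle, but still requires delicate integrality arguments to reduce the characteristic zero invariant theory to characteristic $p$, and care must be taken because in positive characteristic the $T_{i,j}$ for $j > 1$ are genuinely new invariants not recoverable from ordinary traces.
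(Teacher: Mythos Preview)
The paper does not prove this statement at all: Theorem~\ref{Donkin} is quoted verbatim as Donkin's result~\cite{D} and is used as a black box (immediately afterwards, in Proposition~\ref{Am1}). There is no argument in the paper to compare your proposal against.

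That said, your outline is broadly faithful to the actual proof in~\cite{D}. Donkin's argument does proceed by establishing a good filtration on $K[M_n(K)^m]$ as a $\GL_n$-module under conjugation and then reading off the invariants from the trivial sections; the tensor product step you attribute to Mathieu was in fact proved by Donkin himself for $\GL_n$ (and more generally for classical groups) prior to Mathieu's general result, so the logical dependence is not quite as you state it. One point to tighten: the identification of the sections and the explicit description of the highest weight vectors generating the invariants is the technical heart of the paper~\cite{D}, and your phrase ``a bideterminant/straightening calculation \dots\ shows'' glosses over exactly the part that carries the weight. Your closing paragraph about an elementary transfer from characteristic zero is speculative and does not correspond to anything in Donkin's proof or in the present paper.
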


Proposition~\ref{Am10} yields the following observation:

\begin{prop}\label{Am1} All of Donkin's  invariants of  Theorem \ref{Donkin}   can be embedded
in ~$\widetilde{\UD}$.
\end{prop}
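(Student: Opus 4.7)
The plan is to realize each of Donkin's trace invariants $T_{i,j}$ as the specialization of a characteristic coefficient living in $F_1 = \Cent(\widetilde{\UD})$, and then to invoke Proposition~\ref{Am10}. The key multilinear-algebra input is that if $M \in M_n(K)$ has eigenvalues $\lambda_1,\dots,\lambda_n$, then the trace of $M$ acting on $\bigwedge^j K^n$ equals $e_j(\lambda_1,\dots,\lambda_n)$, the $j$-th elementary symmetric function of those eigenvalues, which in turn equals $(-1)^j$ times the coefficient $\omega_j$ of $\lambda^{n-j}$ in the characteristic polynomial of $M$. Applied to $M = x_{i_1}x_{i_2}\cdots x_{i_r}$, this identifies $T_{i,j}(x_1,\dots,x_m)$ with $\pm\omega_j(x_{i_1}\cdots x_{i_r})$, i.e.\ with a characteristic coefficient of a word in the matrix arguments.

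With that identification in hand, I would form the corresponding noncommutative word $w := Y_{i_1}Y_{i_2}\cdots Y_{i_r}$ inside $\UD \subset \widetilde{\UD}$. As noted in the paragraph preceding Lemma~\ref{divAm} (citing \cite[Remark~24.67]{Row2}), the reduced characteristic coefficients $\widetilde{\omega}_j(w)$ are well-defined elements of $F_1$. Proposition~\ref{Am10} then says precisely that under the specialization $\varphi(Y_k) = x_k$ the element $\widetilde{\omega}_j(w)$ maps to $\omega_j(x_{i_1}\cdots x_{i_r}) = \pm T_{i,j}(x_1,\dots,x_m)$. Therefore the assignment $T_{i,j} \mapsto \pm\widetilde{\omega}_j(w)$ realizes each of Donkin's generators as an element of $F_1 \subset \widetilde{\UD}$, and extends to a homomorphism from the full invariant algebra $K[M_n(K)^m]^{\GL_n(K)}$ into $\widetilde{\UD}$.

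The one remaining point — more a verification than an obstacle — is injectivity. Any hypothetical nontrivial polynomial relation among the images $\widetilde{\omega}_j(w)$ in $F_1$ would, upon specialization at the generic point whose coordinates are the algebraically independent entries $\xi^{(k)}_{i,j}$, produce a nontrivial relation among the corresponding Donkin invariants evaluated on generic matrices, and conversely. Since the entries of the $Y_k$ are algebraically independent over $K$ by construction, no such relation can be trivialized by specialization, so the map is an embedding. The argument is uniform in the word $i = (i_1,\dots,i_r)$, and so handles all Donkin generators simultaneously.
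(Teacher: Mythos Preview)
Your argument is correct and is exactly the line the paper intends: the paper gives no proof beyond the sentence ``Proposition~\ref{Am10} yields the following observation,'' and you have spelled out the missing link, namely that $T_{i,j}(x_1,\dots,x_m)$ is (up to sign) the $j$-th characteristic coefficient of the word $x_{i_1}\cdots x_{i_r}$, so that the reduced characteristic coefficients of $Y_{i_1}\cdots Y_{i_r}$ in $F_1$ furnish the required elements. One small point worth flagging: Proposition~\ref{Am10} as literally stated speaks only of characteristic coefficients of the generators $Y_k$, whereas you apply it to an arbitrary word $w$ in the $Y_k$; this is harmless since the proof of Proposition~\ref{Am10} via \eqref{trace2pol0} works for any $T$, but you might say so explicitly.
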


For $n>2$, we also have an easy consequence of the theory of
division algebras.

\begin{lemma}\label{div}  Suppose for some polynomial $p$ and some  number $q< n$,
 that $p^q$ takes on only scalar values in $M_n(K)$, over an infinite field $K$, for $n$ prime. Then  $p$
 takes on only scalar values in $M_n(K)$.
\end{lemma}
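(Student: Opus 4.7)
The plan is to lift the question to Amitsur's algebra of generic matrices and then exploit the structure of subfields of the associated division algebra $\widetilde{\UD}$. Let $Y_1,\dots,Y_m$ be generic $n\times n$ matrices with entries $\xi_{i,j}^{(k)}$, set $a:=p(Y_1,\dots,Y_m)\in \UD$, and view $a$ inside $M_n(F(\xi_{i,j}^{(k)}))$. Each entry of $a^q$ is an honest polynomial in the $\xi_{i,j}^{(k)}$; by hypothesis every specialization $Y_k\mapsto x_k\in M_n(K)$ sends $a^q$ to a scalar matrix, so the off-diagonal entries of $a^q$ vanish on all $K$-tuples and the $n$ diagonal entries agree on all such tuples. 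Since $K$ is infinite, these are polynomial identities, and hence $a^q$ is already a scalar matrix in $M_n(F(\xi_{i,j}^{(k)}))$. Intersecting with $\widetilde{\UD}$ therefore puts $a^q\in F_1=\Cent(\widetilde{\UD})$.

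Next, $a$ satisfies the equation $x^q-a^q=0$ over $F_1$, so $F_1[a]$ is a commutative finite-dimensional $F_1$-subalgebra of $\widetilde{\UD}$ of dimension at most $q$. Being a commutative subring of a division ring it is an integral domain, and being finite-dimensional over the field $F_1$ it is itself a field. By the standard centralizer theory for central simple algebras, any subfield $L\supseteq F_1$ of $\widetilde{\UD}$ satisfies $[L:F_1]\cdot[C_{\widetilde{\UD}}(L):F_1]=n^2$ and $L\subseteq C_{\widetilde{\UD}}(L)$, forcing $[L:F_1]\le n$ and $[L:F_1]\mid n^2$. When $n$ is prime these constraints collapse to $[L:F_1]\in\{1,n\}$. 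Since $[F_1[a]:F_1]\le q<n$, we must therefore have $a\in F_1$.

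Finally, $a\in F_1\cap\UD$ means $a=cI$ for some $c\in F(\xi_{i,j}^{(k)})$, and since $a$ was originally a polynomial in the generic matrices, $c$ is actually a polynomial in the $\xi_{i,j}^{(k)}$ (no denominators arise). Specialization then yields $p(x_1,\dots,x_m)=c(x)\cdot I$, a scalar matrix, for every choice of $x_i\in M_n(K)$. The main subtlety is Step~1, where infiniteness of $K$ is crucial in order to upgrade ``scalar on every specialization'' to ``scalar identically in $M_n(F(\xi_{i,j}^{(k)}))$''; the primality of $n$ enters only in Step~2, and the whole argument is uniform in the characteristic of $K$ since it requires neither separability nor any Frobenius twist.
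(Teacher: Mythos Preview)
Your proof is correct and follows essentially the same route as the paper's: pass to the generic division algebra $\widetilde{\UD}$, observe that $p^q$ central forces $F_1[p]$ to be a subfield of dimension at most $q<n$, and invoke the fact that subfields of a degree-$n$ division algebra over its center have dimension dividing $n$ (hence $1$ or $n$ when $n$ is prime). The paper's argument is the same but much terser; you spell out carefully why the hypothesis over the infinite field $K$ lifts to $a^q\in F_1$ and how centrality of $a$ specializes back, and you avoid the paper's (apparently unnecessary) step of adjoining a $q$-th root of unity.
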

\begin{proof}  We can view $p$ as an element of the generic
division algebra $\widetilde{\UD}$ of degree~$n$, and we adjoin a
$q$-root of 1 to $K$ if necessary. Then $p$ generates a subfield
of dimension 1 or $n$ of $\widetilde{\UD}$. The latter is
impossible, so the dimension is 1; i.e., $p$ is already central.
\end{proof}

\subsection{The case $M_3(K)$}$ $

Now we turn specifically to  the case $n=3$.
 Let $K$ be an  algebraically closed field.
We say that a polynomial $p$ is {\bf trace-vanishing} if each of
its evaluations have trace 0; i.e., $\tr(p)$ is a trace identity
of $p$. Also, for $\chara(K) \ne 3$ we fix a  primitive cube root
$\varepsilon \ne 1$  of $1$; when $\chara(K) = 3$ we take
$\varepsilon = 1$.

\begin{lemma}\label{extr}
We define functions $\omega _k : M_3(K) \to K$ as follows: Given a
matrix $a$, let $\lambda_1,\lambda_2, \lambda _3$ be the
eigenvalues of $a,$ and denote
$$\omega_k : = \omega_k (a) =\sum\limits_{1\leq i_1<i_2<\dots<i_k\leq
3}\lambda_{i_1}\dots \lambda_{i_k}.$$ Let $p(x_1,\dots,x_m)$ be
a semi-homogeneous, trace-vanishing polynomial.

 Consider the rational function
$H(x_1,\dots,x_m)
=\frac{\omega_2(p(x_1,\dots,x_m))^3}{\omega_3(p(x_1,\dots,x_m))^2}$
(taking values in $K \cup \{\infty\}$). If $\Image H$ is dense in
~$K$, then  $\Image p$ is dense in $\sl_3$.
\end{lemma}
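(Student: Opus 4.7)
\begin{proof-sketch}

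The plan is to convert density of $H$ in $K$ into density of the pair $(\omega_2, \omega_3)$ in $\mathbb{A}^2$ via the semi-homogeneity of $p$, and then into density of $\Image p$ in $\sl_3(K)$ using the $GL_3$-invariance of $\Image p$ (Remark~\ref{cong}). Since $p$ is trace-vanishing, $\omega_1(p(\vec{a})) = 0$ on every evaluation, so the characteristic polynomial has the depressed form $\lambda^3 + \omega_2\lambda - \omega_3$ and $(\omega_2, \omega_3)$ determines the conjugacy class away from the discriminant curve.

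For the first step, write $p$ as semi-homogeneous of weighted degree $d$ with weights $(w_1, \dots, w_m)$. Then the substitution $x_j \mapsto t^{w_j} x_j$ scales $p$ by $t^d$, hence scales the eigenvalues by $t^d$, $\omega_2$ by $t^{2d}$, $\omega_3$ by $t^{3d}$, and leaves $H$ invariant. As $H$ is a ratio of polynomial functions, its image on the open locus $\{\omega_3 \ne 0\}$ is constructible by Chevalley's theorem; being dense, it is therefore cofinite, say $K \setminus F$ with $F$ finite. A short verification---using that $(\omega_2/\omega_2^0)^3 = (\omega_3/\omega_3^0)^2$ whenever two $H$-values coincide, together with the existence of $d$-th roots in the algebraically closed field $K$---shows that from any evaluation realizing $H = h$ (with $h \in K \setminus F$) one can, by a suitable choice of $t \in K^*$, produce every pair $(\omega_2, \omega_3) \in K \times K^*$ with $\omega_2^3/\omega_3^2 = h$. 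Hence $\{(\omega_2(p(\vec{a})), \omega_3(p(\vec{a})))\}$ contains the Zariski-open dense subset $W = \{\omega_3 \ne 0\} \setminus \bigcup_{h \in F}\{\omega_2^3 = h\omega_3^2\}$ of $\mathbb{A}^2$.

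For the second step, consider the characteristic map $\pi : \sl_3(K) \to \mathbb{A}^2$, $a \mapsto (\omega_2(a), \omega_3(a))$. Let $W' \subseteq W$ be obtained by deleting the discriminant locus (the cuspidal cubic $4\omega_2^3 + 27\omega_3^2 = 0$ when $\chara(K) \ne 3$, and the line $\omega_2 = 0$ when $\chara(K) = 3$), so $W'$ is still open and dense in $\mathbb{A}^2$. Every matrix in $\pi^{-1}(W')$ is regular semisimple, so each fiber of $\pi$ over $W'$ is a single $GL_3$-conjugacy class. Conjugation invariance of $\Image p$ therefore gives $\Image p \supseteq \pi^{-1}(W')$, and this is a nonempty Zariski-open subset of the irreducible variety $\sl_3(K)$, hence dense.

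The main obstacle is the realizability claim in step one---confirming that semi-homogeneous rescaling and algebraic closure together suffice to recover every desired $(\omega_2, \omega_3)$ from one starting evaluation with the matching value of $H$; the second step is then geometrically straightforward in every characteristic, once the appropriate proper closed discriminant locus has been excised.

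\end{proof-sketch}
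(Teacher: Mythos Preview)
Your argument is correct and follows the same outline as the paper's: use the semi-homogeneity of $p$ to pass from density of the ratio $H$ to density of the pair $(\omega_2(p),\omega_3(p))$ in $K^2$, and then conclude density of $\Image p$ in $\sl_3$. You are more explicit at each stage (Chevalley for cofiniteness of $\Image H$, the rescaling computation, and conjugation invariance over the regular semisimple locus), whereas the paper compresses the final step into the assertion that density of the eigenvalue triples in the plane $x+y+z=0$ forces density of $\Image p$.
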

\begin{proof}  Note that $\omega_2(p)^3$ and $\omega_3(p)^2$ are
semi-homogeneous. Thus, $\Image H$ is dense in $K$ iff the image
of the pair $(\omega_2(p)^3,\omega_3(p)^2)$ is dense in~$K^2$. But
since $\omega_2$ and $\omega_3$ are algebraically independent, so
are $\omega_2(p)^3$ and $\omega_3(p)^2$, so we conclude that the
image of the pair $(\omega_2(p)^3,\omega_3(p)^2)$ is dense in
$K^2$. Thus, the set of characteristic polynomials of evaluations
of $p$ is dense in the space of all possible characteristic
polynomials of trace zero matrices. Therefore, the set of all
triples $(\lambda_1,\lambda_2,-\lambda_1-\lambda_2)$ of
eigenvalues of matrices from $\Image p$ is dense in the plane
$x+y+z=0$ defined in $K^3$, implying that $\Image p$ is dense in
$\sl_3$.
\end{proof}

\begin{theorem}\label{semi_tr0_3}
Let $p(x_1, \dots, x_m)$ be a semi-homogeneous polynomial which is
trace-vanishing on $3\times 3$ matrices. Then $\Image p$ is one of
the following:

\begin{itemize}
\item \{0\}, \item  the set of scalar matrices (which can occur
only if $\Char K=3$),
 \item a dense subset of  $\sl_3(K)$, or \item
the set of $3-$scalar matrices, i.e., the set of matrices with
eigenvalues $(\gamma,\gamma\varepsilon,\gamma\varepsilon^2)$,
where $\varepsilon$ is our cube root of $1$.
\end{itemize}
\end{theorem}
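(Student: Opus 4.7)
My plan is to use Lemma~\ref{extr} as the main engine and to rule out the one remaining degenerate case via a division-ring argument inside $\widetilde{\UD}$. The outline: discard the polynomial-identity case; observe that otherwise $\omega_3(p)$ cannot vanish identically; reduce, by a Zariski argument on the irreducible variety $M_3(K)^m$, to a dichotomy for the $\GL_3$-invariant rational function $H = \omega_2(p)^3/\omega_3(p)^2$; then dispose of the constant values $c=0$ (gives the $3$-scalar bullet) and $c\ne 0$ (to be shown impossible).

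If $p$ is a polynomial identity of $M_3(K)$ then $\Image p=\{0\}$. Otherwise $p(Y_1,\dots,Y_m)$ is a nonzero element of $\UD$, hence invertible in the division algebra $\widetilde{\UD}$, so $\omega_3(p)=\det(p)$ is a nonzero polynomial in the entries. Thus $H$ is defined on a dense open subset of $M_3(K)^m$; regarded as a morphism to $K\cup\{\infty\}$, its image is either cofinite (hence dense in $K$) or a single point. The dense case gives $\Image p$ dense in $\sl_3(K)$ by Lemma~\ref{extr}. When $H\equiv 0$, we have $\omega_2(p)\equiv 0$, and the Cayley--Hamilton relation for trace-zero matrices reduces to $p^3\equiv\omega_3(p)\cdot I$, so every evaluation of $p$ is $3$-scalar. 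Conversely, Remark~\ref{cong} together with the closure of $\Image p$ under the scaling $a\mapsto t^d a$ (which sweeps out all of $K$ since $K$ is algebraically closed and $p$ is semi-homogeneous) promotes any single nonzero value to the full set of $3$-scalar matrices. In characteristic $3$, where $\varepsilon=1$, I would further split according to whether $p-\gamma I$ vanishes identically, producing the separate "scalar matrices" bullet.

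The principal obstacle is eliminating the case $H\equiv c$ with $c\in K^\times$. Applying unique factorization in the polynomial ring $K[\xi_{i,j}^{(k)}]$ to $\omega_2(p)^3=c\,\omega_3(p)^2$ yields a polynomial $g$ and scalars $\alpha,\beta\in K^\times$ with $\omega_2(p)=\alpha g^2$, $\omega_3(p)=\beta g^3$ and $\alpha^3=c\beta^2$. Cayley--Hamilton now reads $p^3+\alpha g^2 p=\beta g^3\cdot I$; dividing by $g^3$ inside $\widetilde{\UD}$ gives $(p/g)^3+\alpha(p/g)-\beta I=0$. Since $K$ is algebraically closed the cubic $T^3+\alpha T-\beta$ splits as $\prod_{i=1}^3(T-\mu_i)$ with $\mu_i\in K$, all central, so $\prod_i(p/g-\mu_i)=0$ in the domain $\widetilde{\UD}$, forcing some factor to vanish. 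Hence $p=\mu_i g\cdot I$ is a scalar. The trace-zero hypothesis gives $3\mu_i g\equiv 0$: when $\chara K\ne 3$ this yields $\mu_i=0$, whence $\beta=0$, contradicting $\omega_3(p)\not\equiv 0$; when $\chara K=3$ the scalar form of $p$ gives $\omega_2(p)=3(\mu_i g)^2\equiv 0$, forcing $c=0$ and again a contradiction. I expect the most delicate steps to be (i) the UFD factorization justifying the clean form $\omega_2=\alpha g^2$, $\omega_3=\beta g^3$, (ii) exploiting the algebraically-closed hypothesis so that the factors $\mu_i$ lie in the center and the division-ring argument applies, and (iii) keeping the characteristic-$3$ bookkeeping straight so that the scalar-matrix bullet is cleanly identified with its proper sub-case.
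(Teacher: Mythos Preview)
Your argument is correct, and the route you take in the constant-$H$ case is genuinely different from the paper's. Both proofs open identically (discard the PI case, note that $\omega_3(p)\not\equiv 0$ by Amitsur, and invoke Lemma~\ref{extr} when $H$ has dense image), but once $H\equiv c$ the paper and you diverge. The paper argues that the generic evaluation $q=p(Y_1,\dots,Y_m)$ has three distinct eigenvalues (Lemmas~\ref{divAm} and~\ref{div2}), then observes that the relation $\alpha\omega_2^3+\beta\omega_3^2=0$ forces the eigenvalue ratio $\lambda_1/\lambda_2$ of any evaluation to satisfy a fixed sextic, whose six roots are accounted for by the $S_3$-permutations except in the degenerate triples $(1,1,-2)$, $(1,-1,0)$, $(1,\varepsilon,\varepsilon^2)$; after ruling out the first two it still has to run a secondary case analysis on whether $\tr(p^2)$ or $\tr(p^3)$ vanishes, with a separate treatment of characteristic~$2$. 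Your UFD trick, extracting $g$ with $\omega_2(p)=\alpha g^2$, $\omega_3(p)=\beta g^3$, and then factoring the Cayley--Hamilton cubic for $p/g$ over the algebraically closed field inside the domain $\widetilde{\UD}$, collapses all of that into a single uniform argument with no characteristic-$2$ bifurcation. The only point to make explicit is that $g$ really lies in $F_1=\Cent(\widetilde{\UD})$: this follows because $g=(\alpha/\beta)\,\omega_3(p)/\omega_2(p)$ is a ratio of characteristic coefficients of $p(Y_1,\dots,Y_m)$, which are central by Proposition~\ref{Am10}.

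One small caveat: your sentence about conjugation plus scaling ``promoting any single nonzero value to the full set of $3$-scalar matrices'' is clean only when $\chara K\neq 3$, since then a nonzero $3$-scalar matrix is diagonalizable with distinct eigenvalues and all such matrices form a single orbit up to scalar. In characteristic~$3$ the $3$-scalar matrices have a single eigenvalue and come in several Jordan types, so the orbit argument does not immediately give the full set; the paper's proof is equally terse at this spot, so this is not a defect peculiar to your approach, but you should flag it when you write out the characteristic-$3$ split you allude to.
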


\begin{proof-of-thm} {\ref{semi_tr0_3}}
We define the functions $\omega _k : M_n(K) \to K$ as in
Lemma~\ref{extr}, and consider the rational function
$H=\frac{\omega_2(p(x_1,\dots,x_m))^3}{\omega_3(p(x_1,\dots,x_m))^2}$
(taking values in $K \cup \{\infty\}$).

If $\omega_2(p)=\omega_3(p)=0$,  then each evaluation of $p$ is a
nilpotent matrix, contradicting Amitsur's Theorem. Thus, either
$\Image H$  is dense in $K$, or $H$ must be constant.

If $\Image H$ is dense in $K$, then  $\Image p$ is dense in $\sl_3$ by Lemma~\ref{extr}.

So we may assume that $H$ is a constant, i.e.,
 $\alpha \omega_2^3(p)+\beta \omega_3^2(p)=0$ for
some  $\alpha ,\beta \in K$ not both $0$. Fix generic matrices
$Y_1,\dots,Y_m$. We claim that the eigenvalues
$\lambda_1,\lambda_2,-\lambda_1-\lambda_2$ of
$q:=p(Y_1,\dots,Y_m)$
  are pairwise distinct. Otherwise either they  are all
equal, or two of them are equal and the third is not, each of
which is impossible by Lemmas~\ref{divAm} and \ref{div2} since $q
\in \widetilde{\UD}$.

 Let
$\lambda_1',\lambda_2',-\lambda_1'-\lambda_2'$ be the eigenvalues of
another matrix $r\in\Image p.$ Thus we have the following:
$$\alpha\omega_2^3(r)+\beta\omega_3^2(r)=\alpha\omega_2^3(q)+\beta\omega_3^2(q)=0.$$
Therefore we have homogeneous equations on the eigenvalues.
Dividing by $\lambda_2^6$ and $\lambda_2'^6$ respectively, we have
the same two polynomial equations of degree 6 on
$\frac{\lambda_1}{\lambda_2}$ and $\frac{\lambda'_1}{\lambda'_2}$,
yielding six possibilities for $\frac{\lambda'_1}{\lambda'_2}$.
The six permutations of $\lambda_1,\lambda_2,$ and
$\lambda_3=-\lambda_1-\lambda_2$  define six pairwise different
$\frac{\lambda'_1}{\lambda'_2}$ unless
$(\lambda_1,\lambda_2,\lambda_3)$ is   a permutation (multiplied
by a scalar) of one of the following triples: $(1,1,-2),\
(1,-1,0),(1,\varepsilon,\varepsilon^2).$ The first case is
impossible since the eigenvalues must be pairwise distinct. The
second case give us an element of Amitsur's algebra
$\widetilde{\UD}$ with eigenvalue $0$ and thus determinant 0,
contradicting Amitsur's Theorem. In the third case the polynomial
$p$ is $3-$scalar. Thus, either $p$ is $3-$scalar polynomial, or
each matrix from $\Image p$ will have the same eigenvalues up to
permutation and scalar multiple. Note for $p$ being $3-$scalar
this is true also.

Assume that for some $i\in\{2,3\}$ that $\tr( p^i)$ is not
identically zero. Then $\lambda_1^i, \lambda_2^i$, and
$\lambda_3^i$ are three linear functions on $\tr(p^i)$. Hence we
have the PI (polynomial identity)
$(p^i-\lambda_1^i)(p^i-\lambda_2^i)(p^i-\lambda_3^i)$. Thus by
Amitsur's Theorem, one of the factors is a PI. Hence $p^i$ is a
scalar matrix. However $i\neq 2$ by Lemma~\ref{div2}. Hence $i=3.$
In this case the image of $p$ is the set of matrices with
eigenvalues $\{(\gamma,\gamma\varepsilon,\gamma\varepsilon^2):
\gamma \in K\}$.

Thus, we may assume that $p$ satisfies $\tr(p^i)=0$ for $i=1,\ 2$
and $3$.  Now $\omega_1(p)=\tr(p)=0$ and
$2\omega_2(p)=(\tr(p))^2-\tr(p^2)=0$.

Hence $\omega_1=\omega_2=0$ if $\chara(K) \ne 2$; in this case
$\omega_3$ is either $0$ (and hence $p$ is PI) or not $0$ (and
hence $p$ is $3-$scalar).

So assume that $\chara (K)=2$. Recall that
$$0=\tr
(p^3)=\lambda_1^3+\lambda_2^3+\lambda_3^3=\lambda_1^3+\lambda_2^3+\lambda_3^3-3\lambda_1\lambda_2\lambda_3+3\lambda_1\lambda_2\lambda_3.$$
But
$\lambda_1^3+\lambda_2^3+\lambda_3^3-3\lambda_1\lambda_2\lambda_3$
is a multiple of $\lambda_1+\lambda_2+\lambda_3$ (seen by
substituting $-(\lambda_1+\lambda_2)$ for $\lambda_3)$ and thus
equals $0$. Thus, $0 = 3 \lambda_1\lambda_2\lambda_3 =
\lambda_1\lambda_2\lambda_3 =\omega_3(p)$, and the Hamilton-Cayley
equation yields $p^3+\omega_2p=0.$ Therefore, $p(p^2+\omega_2)=0$
and by Amitsur's Theorem either $p$ is PI, or $p^2 = -\omega_2$
(which is central), implying by Lemma~\ref{div} that $p$ is
central.
\end{proof-of-thm}

\begin{examples}
The   element  $ [x,[y,x]x[y,x]^{-1}]$ of $\widetilde{UD}$ takes on only   $3-$scalar values (see
\cite[Theorem 3.2.21, p.~180]{Row}) and thus gives rise to a homogeneous polynomial
taking on  only $3-$scalar values.

\end{examples}

Now we consider the possible image sets of multilinear
trace-vanishing polynomials.

\begin{lemma}\label{diag-not-scal}
If $p$ is a multilinear polynomial, not PI nor central, then there
exist a collection of matrix units $(E_1,E_2,\dots,E_m)$ such that
$p(E_1,E_2,\dots,E_m)$ is a diagonal but not scalar matrix.
\end{lemma}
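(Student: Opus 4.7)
The plan is to combine Lemma~\ref{graph} with Lemma~\ref{linear} through a linear-span argument. The crucial observation is that, by multilinearity, every evaluation of $p$ on arbitrary matrices is a $K$-linear combination of evaluations on tuples of matrix units (expand each $a_i=\sum c^{(i)}_{jk}e_{jk}$ and distribute). Hence the $K$-linear span of $\Image p$ coincides with the $K$-linear span of $\{p(E_1,\dots,E_m):E_i\text{ matrix units}\}$.

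First, since $p$ is neither PI nor central, Lemma~\ref{linear} gives that the linear span of $\Image p$ is either $\sl_n(K)$ or $M_n(K)$. In either case this span contains a non-scalar diagonal matrix, for instance $e_{11}-e_{22}$.

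Second, by Lemma~\ref{graph}, every matrix-unit evaluation of $p$ is of one of three types: zero, a scalar multiple $c\cdot e_{ij}$ of an off-diagonal unit, or a diagonal matrix. Let $D$ denote the $K$-span of those evaluations that are diagonal, and $O$ the $K$-span of those of the form $c\cdot e_{ij}$ with $i\neq j$. Then $D$ lies in the diagonal subspace of $M_n(K)$, $O$ lies in its off-diagonal complement, and the total span equals $D\oplus O$. Intersecting with the diagonal subspace shows that $e_{11}-e_{22}$ must in fact lie in $D$.

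Finally, if every diagonal matrix-unit evaluation of $p$ were scalar, then $D$ would be contained in the line $KI$, which does not contain $e_{11}-e_{22}$. This contradiction forces some tuple $(E_1,\dots,E_m)$ of matrix units for which $p(E_1,\dots,E_m)$ is diagonal but not scalar. I do not expect a serious obstacle; the only thing to be careful about is making explicit that multilinearity forces the two spans to coincide, so that the non-scalar diagonal element produced by Lemma~\ref{linear} can be traced back to matrix-unit evaluations.
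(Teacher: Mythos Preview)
Your argument is correct and is essentially the same as the paper's: both use Lemma~\ref{graph} to split the matrix-unit evaluations into diagonal and off-diagonal parts, and then invoke Lemma~\ref{linear} (together with multilinearity) to force the diagonal part of the span to contain a non-scalar trace-zero diagonal matrix. You have simply made explicit the step---that the span of $\Image p$ equals the span of the matrix-unit evaluations---which the paper's one-line proof takes for granted.
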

\begin{proof}
By Lemmas \ref{graph} and \ref{linear}, the linear span of all
$p(E_1,E_2,\dots,E_m)$ for any matrix units $E_i$ such that
$p(E_1,E_2,\dots,E_m)$ is diagonal includes all
$\diag\{x,y,-x-y\}$. In particular there exist a collection of
matrix units $(E_1,E_2,\dots,E_m)$ such that
$p(E_1,E_2,\dots,E_m)$ is a diagonal but not scalar matrix.
\end{proof}

\begin{theorem}\label{multi_tr=0_3}
Let $p$ be a   multilinear  polynomial which is trace-vanishing on
$3\times 3$ matrices  over a field $K$ of arbitrary
characteristic. Then $\Image p$ is one of the following:

\begin{itemize}
\item \{0\}, \item  the set of scalar matrices,
\item the set of $3-$scalar matrices, or
\item  for each triple
$\lambda_1+\lambda_2+\lambda_3=0$ there exist a matrix $M\in\Image
p$ with eigenvalues $\lambda_1,\ \lambda_2$ and $\lambda_3$.
\end{itemize}

\end{theorem}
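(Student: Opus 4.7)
The plan is to apply Theorem~\ref{semi_tr0_3} to reduce to the case where $\Image p$ is Zariski dense in $\sl_3(K)$, and then to use multilinearity---beyond the semi-homogeneity already exploited---to upgrade density into the claim that every trace-zero triple is realized as eigenvalues of some element of $\Image p$.

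Since every multilinear polynomial is semi-homogeneous, Theorem~\ref{semi_tr0_3} gives four possibilities for $\Image p$: $\{0\}$, the set of scalar matrices (only when $\chara(K)=3$), the set of $3$-scalar matrices, or a dense subset of $\sl_3(K)$. The first three cases match the first three items of the statement, so I focus on the dense case; letting $\chi=(\omega_2,\omega_3):\sl_3(K)\to K^2$ denote the characteristic-polynomial map, the goal reduces to $\chi(\Image p)=K^2$. Two structural observations drive the argument. First, multilinearity gives $p(tx_1,x_2,\ldots,x_m)=t\,p(x_1,\ldots,x_m)$, so $\Image p$ is a cone in $\sl_3(K)$ and $\chi(\Image p)\subseteq K^2$ is stable under the $K^*$-action $(A,B)\mapsto(t^2A,t^3B)$; in particular, the $K^*$-orbits in $K^2\setminus\{(0,0)\}$ are parametrized by $H:=\omega_2^3/\omega_3^2\in\mathbb{P}^1$, so by $K^*$-invariance it suffices to show that $H\circ p$ attains every value in $\mathbb{P}^1$. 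Second, $0=p(0,x_2,\ldots,x_m)\in\Image p$, so $(0,0)\in\chi(\Image p)$ automatically.

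Density of $\chi(\Image p)$ in $K^2$ forces $\omega_2(p)$ and $\omega_3(p)\in K[M_3^m]$ to be algebraically independent (otherwise $\chi(\Image p)$ would lie in a proper closed curve, as in the proof of Lemma~\ref{extr}); in particular, each is individually nonzero, and for every $c\in K$ the polynomial $P_c:=\omega_2(p)^3-c\,\omega_3(p)^2$ is nonzero on $M_3^m$. Provided the ``nilpotent locus'' $N:=\{\omega_2(p)=\omega_3(p)=0\}\subseteq M_3^m$ has codimension at least~$2$, the codimension-one hypersurfaces $\{P_c=0\}$, $\{\omega_3(p)=0\}$, and $\{\omega_2(p)=0\}$ each strictly contain~$N$, so each supplies a substitution $x$ with $(\omega_2(p(x)),\omega_3(p(x)))\neq(0,0)$. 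These witness $H(p(x))=c$ for every finite $c\in K$, $H(p(x))=\infty$ (a rank-deficient but non-nilpotent value), and $H(p(x))=0$ (a nonzero $3$-scalar) respectively. Combined with $K^*$-invariance and $(0,0)\in\chi(\Image p)$, this yields $\chi(\Image p)=K^2$, proving the fourth case.

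The principal obstacle is establishing the codimension-$2$ claim for $N$---equivalently, that $\omega_2(p)$ and $\omega_3(p)$ form a regular sequence, i.e., share no common irreducible factor in $K[M_3^m]$. Any common factor $g$ would make $\{g=0\}$ a codimension-one locus on which $p$ takes only nilpotent values, collapsing all three of the dimension comparisons above. I expect this will be handled by combining the algebraic independence of $\omega_2,\omega_3$ (as universal Donkin invariants, Theorem~\ref{Donkin}) with the explicit non-nilpotent diagonal matrix supplied by Lemma~\ref{diag-not-scal}---which exhibits a substitution where the two polynomials do not simultaneously vanish---and by treating separately the low-characteristic cases where $\omega_2(p)$ and $\omega_3(p)$ are no longer simple multiples of $\tr(p^2)$ and $\tr(p^3)$, an issue already encountered in the characteristic-$2$ argument at the end of the proof of Theorem~\ref{semi_tr0_3}.
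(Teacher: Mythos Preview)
Your reduction via Theorem~\ref{semi_tr0_3} to the dense case is sound, and the cone/$K^*$-invariance reformulation is correct: the problem does reduce to showing that $H=\omega_2(p)^3/\omega_3(p)^2$ hits every value in $\mathbb{P}^1$ at a \emph{non-nilpotent} evaluation. The gap is exactly where you place it, but the fixes you propose do not close it. Algebraic independence of $\omega_2(p)$ and $\omega_3(p)$ does \emph{not} imply they share no common irreducible factor (e.g.\ $xy$ and $xz$ in $K[x,y,z]$ are algebraically independent but share~$x$), so it does not give $\operatorname{codim} N\ge 2$. Likewise, Lemma~\ref{diag-not-scal} supplies a single point off $N$, which only shows $N$ is proper---i.e.\ $\operatorname{codim} N\ge 1$---and says nothing about common factors. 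Without the codimension-$2$ claim, the dimension comparison ``$\{P_c=0\}$ strictly contains $N$'' collapses: a putative common factor $g$ would make $\{g=0\}$ a hypersurface of nilpotent values sitting inside every $\{P_c=0\}$, and you have no mechanism to locate points of $\{P_c=0\}$ outside it.

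The paper avoids this obstacle entirely by a different device. Rather than dimension-counting, it works on the \emph{line} $a+tb$ where $a=p(Y_1,\dots,Y_m)$ and $b=p(\tilde Y_1,Y_2,\dots,Y_m)$ are non-proportional generic values, and studies the degree-$\le 6$ polynomial $\varphi_{\alpha,\beta}(t)=\alpha\,\omega_2(a+tb)^3+\beta\,\omega_3(a+tb)^2$. The key input is Amitsur's theorem: $a$ and $b$ lie in the generic division algebra $\widetilde{\UD}$, so neither is nilpotent. When $\varphi_{\alpha,\beta}$ is constant this already yields a non-nilpotent witness; when it is non-constant, a careful case analysis (using Remark~\ref{patch1} to show that any \emph{uniquely} determined root $t_i$ would produce a nilpotent element of $\widetilde{\UD}$, a contradiction) forces some root to give a non-nilpotent value. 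This is precisely the substitute for your missing regular-sequence claim: the division-algebra structure, not a codimension estimate, is what rules out the nilpotent locus swallowing the relevant hypersurface.
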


\begin{proof} If the polynomial $\omega_2 (p)$ (defined in the proof of
Theorem~\ref{semi_tr0_3}) is identically zero, then the
characteristic polynomial is $p^3-\omega_3(p)=0$, implying $p$ is
either scalar (which can happen  only if $\Char (K)=3$) or
$3-$scalar. Therefore we may assume that the polynomial $\omega_2
(p)$ is not identically zero. Let
$$f_{\alpha ,\beta }(M)=\alpha \omega_2(M)^3+\beta \omega_3(M)^2.$$  It is enough to
show that for any $  \alpha , \beta \in K$ there exists a
non-nilpotent matrix $M= p(a_1,\dots,a_m)$ such that $f_{\alpha
,\beta }(p(a_1,\dots,a_m))=0,$ since this will imply that the
image of $H$ (defined in Lemma~\ref{extr}) contains all $-\frac {\beta}{\alpha}$ and thus  $K
\cup \{\infty\}$. (For example, if $\alpha= 0$ and $\beta \ne 0,$ then $ \omega _3(M) = 0,$ implying  $ \omega _2(M) \ne 0$
since $ \omega _1(M) = 0 $ and $M$~is non-nilpotent, and thus $H = \infty.)$ Therefore,
for any trace-vanishing  polynomial (i.e., a polynomial
$x^3+\gamma _1 x+\gamma _0$) there is a matrix in $\Image p$ for
which this is the characteristic polynomial. Hence whenever
$\lambda_1+\lambda_2+\lambda_3=0$ there is a matrix with
eigenvalues $\lambda_i$.

Without loss of generality we may assume that $a=p(Y_1,\dots,Y_m)$
and $b=p(\tilde Y_1,Y_2\dots,Y_m)$ are not proportional for
generic matrices $\tilde Y_1,Y_1,\dots,Y_m$,
cf.~\cite[Lemma~2]{BMR2}. Consider the polynomial $\varphi_{\alpha
,\beta }(t)=f_{\alpha ,\beta }(a+tb)$. There are three cases to
consider:

CASE I. $\varphi_{\alpha ,\beta }= 0$ identically. Then $f_{\alpha
,\beta }(a)=0$, and $a$ is not nilpotent by Proposition~\ref{Am1}.

CASE II. $\varphi_{\alpha ,\beta }$ is a constant
 $\tilde \beta  \ne 0.$  Then
$f_{\alpha ,\beta }(b+ta)=t^6\varphi_{\alpha ,\beta }(t^{-1})=\tilde
\beta t^6$; thus $f_{\alpha ,\beta }(b)=0$, and $b$ is not
nilpotent by Proposition~\ref{Am1}.

CASE III. $\varphi_{\alpha ,\beta }$ is not  constant.  Then it
has finitely many roots. Assume that for each substitution $t$ the
matrix $a+tb$ is nilpotent; in particular, $\omega_2(a+tb)=0$.
Note that $\omega_2(a+tb)$ equals  the sum of principal $2\times
2$ minors and thus is a quadratic polynomial (for otherwise
$\omega_2(b)=0$ which means that $\omega_2(p)$ is identically
zero, a contradiction). Hence $\omega_2(a+tb)$  has two roots,
which we denote as $t_1$ and $t_2$. If $t_1=t_2$, then $t_1$ is
 uniquely defined and thus, in view of Remark~\ref{patch1}, is a rational function in the entries of $a$ and $b$, and $a+t_1b$ is
a nilpotent rational function (because we assumed that one of
$a+t_1b$ and $a+t_2b$ is nilpotent, but here they are equal.) At
least one of $t_1$ and $t_2$ is a root of $\varphi_{\alpha ,\beta
}$.

If only $t_1$ is a root, then $t_1$ is uniquely defined and thus,
by Remark~\ref{patch1}, is a rational function; hence,  $a+t_1b$
is a nilpotent polynomial, contradicting Proposition~\ref{Am1}.
Thus, we may assume that both $t_1$ and $t_2$ are roots of
$\varphi_{\alpha ,\beta }$. But $\varphi_{\alpha ,\beta }(t_i)$ is
nilpotent, and in particular $\omega_3(a+t_ib)=0$. Thus there
exists exactly one more root $t_3$ of $\omega_3(a+tb)$, which is
uniquely defined and thus, by Remark~\ref{patch1}, is rational.
Hence we may consider the polynomial $q(x_1,\dots,x_m,\tilde
x_1)=a+t_3b$, which must satisfy the condition $\tr (q)=\det
(q)=0$.  This is impossible for homogeneous $q$ by Theorem
\ref{semi_tr0_3}, and also impossible for nonhomogeneous $q$ since
the leading homogenous component $q_d$ would satisfy $\tr
(q_d)=\det (q_d)=0$, a contradiction.
\end{proof}
\begin{rem}
Assume that $\chara (K) = 3$ and $p$ is a multilinear polynomial,
which is neither PI nor central. Then, according to Lemma \ref{diag-not-scal}
there exists a collection of matrix units $E_i$ such that
$$p(E_1,\dots,E_m)=\diag\{\alpha ,\beta ,\gamma\}$$ is diagonal but not
scalar. Without loss of generality,  $\alpha \neq \beta $. Hence
$p^3(E_1,\dots,E_m)=\diag\{a^3,\beta ^3,\gamma^3\}$ and $\alpha
^3\neq \beta ^3$ because $\chara (K) = 3$. Therefore $p$ is not
$3-$scalar.
\end{rem}
\begin{theorem}\label{dense}
If there exist $\alpha ,\ \beta ,$ and $\gamma$ in $K$ such that
$\alpha +\beta +\gamma,\ \alpha +\beta
\varepsilon+\gamma\varepsilon^2$ and $\alpha +\beta
\varepsilon^2+\gamma\varepsilon$ are nonzero, together with matrix
units $E_1,E_2,\dots,E_m$ such that $p(E_1,E_2,\dots,E_m)$ has
eigenvalues $\alpha ,\ \beta $ and $\gamma$, then $\Image p$ is
dense in $M_3$.
\end{theorem}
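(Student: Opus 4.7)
The plan is to reduce density of $\Image p$ in $M_3(K)$ to dominance of the characteristic-coefficient map $\Phi: M_3(K)^m \to K^3$ defined by $\Phi = (\omega_1(p),\omega_2(p),\omega_3(p))$. This reduction uses conjugation invariance of $\Image p$ (Remark~\ref{cong}) together with the fact that matrices with three distinct eigenvalues form a Zariski-dense open subset of $M_3(K)$ on which the conjugacy class is determined by the characteristic polynomial alone. Note also that $\Image \Phi$ is a cone under the weighted $K^\times$-action $\mu\cdot(\omega_1,\omega_2,\omega_3) = (\mu\omega_1, \mu^2\omega_2, \mu^3\omega_3)$ (induced by scalar multiplication of a single argument of $p$), so dominance of $\Phi$ is equivalent to density of its weighted-projective image in $\mathbb{P}(1,2,3)$.

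The hypothesis translates cleanly. By Lemma~\ref{graph}, $p(E_1,\dots,E_m) = \diag\{\alpha,\beta,\gamma\}$ after reordering the basis. Using $1+\varepsilon+\varepsilon^2 = 0$ and $\varepsilon^3=1$, one computes
$$(\alpha+\beta\varepsilon+\gamma\varepsilon^2)(\alpha+\beta\varepsilon^2+\gamma\varepsilon) = \omega_1^2 - 3\omega_2$$
at this evaluation point, so the hypothesis is equivalent to $\omega_1\ne 0$ and $\omega_1^2-3\omega_2\ne 0$ there. Consequently the evaluation lies outside the two natural weighted-homogeneous divisors $\{\omega_1=0\}$ (trace-zero matrices) and $\{\omega_1^2-3\omega_2 = 0\}$ (scalars plus $3$-scalar matrices), so $\Image \Phi$ is contained in neither. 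In particular $p$ is neither PI nor central.

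The core step is to show that $\omega_1(p),\omega_2(p),\omega_3(p)$ are algebraically independent as rational functions on $M_3(K)^m$. I plan to argue by contradiction, adapting the perturbation technique in the proof of Theorem~\ref{multi_tr=0_3}. Suppose $F(\omega_1(p),\omega_2(p),\omega_3(p)) \equiv 0$ for some nonzero weighted-homogeneous $F$. Choose non-proportional generic evaluations $a = p(Y_1,\dots,Y_m)$ and $b = p(\tilde Y_1, Y_2,\dots, Y_m)$, form $\varphi(t) = F(\omega_1(a+tb),\omega_2(a+tb),\omega_3(a+tb))$, and perform the three-case analysis on $\varphi$ (identically zero, nonzero constant, nonconstant polynomial in $t$) exactly as in the proof of Theorem~\ref{multi_tr=0_3}. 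In the nonconstant case, Remark~\ref{patch1} forces the finitely many roots to be rational functions in the entries of $a$ and $b$, producing via Proposition~\ref{Am1} a nilpotent element of $\widetilde{\UD}$ which contradicts Amitsur's theorem.

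The main obstacle will be that a general weighted-homogeneous $F$ need not be of the simple form $\alpha\omega_2^3+\beta\omega_3^2$ treated in Theorem~\ref{multi_tr=0_3}, so $\varphi(t)$ can have higher degree and additional roots whose simultaneous nilpotency is less immediate to exclude. Here is where the hypothesis enters decisively: $\omega_1\ne 0$ and $\omega_1^2-3\omega_2\ne 0$ at the matrix-unit evaluation mean that any putative polynomial constraint $F(\omega_1(p),\omega_2(p),\omega_3(p))\equiv 0$ would specialize, at $(E_1,\dots,E_m)$, to a constraint inconsistent with the given nonvanishings (ruling out the two exceptional divisors against which $\varphi$ might otherwise collapse). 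Combining this with Lemma~\ref{diag-not-scal} to provide additional diagonal non-scalar evaluations closes the contradiction, and establishes algebraic independence of $(\omega_1(p),\omega_2(p),\omega_3(p))$, hence dominance of $\Phi$ and density of $\Image p$ in $M_3(K)$.
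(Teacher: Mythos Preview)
Your reduction to dominance of $\Phi=(\omega_1(p),\omega_2(p),\omega_3(p))$ is correct, and the translation of the hypothesis into $\omega_1\ne 0$ and $\omega_1^2-3\omega_2\ne 0$ at the given evaluation is also correct. The gap is in the ``core step.''

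The assertion that ``any putative polynomial constraint $F(\omega_1(p),\omega_2(p),\omega_3(p))\equiv 0$ would specialize \dots\ to a constraint inconsistent with the given nonvanishings'' is simply false. A single point $(\omega_1(A),\omega_2(A),\omega_3(A))\in K^3$ lies on infinitely many weighted-homogeneous hypersurfaces; your hypothesis excludes exactly two of them ($\omega_1=0$ and $\omega_1^2=3\omega_2$) and says nothing about the rest. Nothing you have written prevents $\Image\Phi$ from sitting inside some higher-degree hypersurface through that point. Moreover, your adaptation of the Case~III argument from Theorem~\ref{multi_tr=0_3} is not an argument: in that proof one does not ``produce a nilpotent element of $\widetilde{\UD}$'' directly from the roots of $\varphi$. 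The paper exploits that $\omega_2(a+tb)$ and $\omega_3(a+tb)$ have degrees $2$ and $3$ in $t$, and that $f_{\alpha,\beta}$ has the very special shape $\alpha\omega_2^3+\beta\omega_3^2$, to count and isolate roots. For a general weighted-homogeneous $F$, $\varphi(t)$ can have arbitrarily high degree and none of this root-counting survives.

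The paper's route is entirely different and more concrete. It introduces the cyclic permutation $\chi$ on matrix units ($1\mapsto 2\mapsto 3\mapsto 1$) and forms
\[
f(T_1,\dots,T_m)=p\bigl(\dots,\,t_{i,1}E_i+t_{i,2}\chi(E_i)+t_{i,3}\chi^{-1}(E_i),\,\dots\bigr).
\]
A mod-$3$ index argument on the underlying graph shows every term of $f$ is diagonal, so $\Image f$ lies in the $3$-dimensional space of diagonal matrices. A differential/chain argument then shows that if $\Image f$ were only $2$-dimensional, the three specific values $\diag\{\alpha,\beta,\gamma\}$, $\diag\{\beta,\gamma,\alpha\}$, $\diag\{\gamma,\alpha,\beta\}$ would have to lie in a common plane; but the determinant of the corresponding circulant matrix is exactly $(\alpha+\beta+\gamma)(\alpha+\beta\varepsilon+\gamma\varepsilon^2)(\alpha+\beta\varepsilon^2+\gamma\varepsilon)$, nonzero by hypothesis. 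Hence $\Image f$ is dense in the diagonals, and conjugation invariance (Remark~\ref{cong}) gives density in $M_3(K)$. The hypothesis is used to guarantee linear independence of three concrete diagonal matrices, not to rule out hypersurfaces in $(\omega_1,\omega_2,\omega_3)$-space.
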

\begin{proof}
Define $\chi$ to be the permutation  of the set of matrix units,
sending the indices $1\to 2$, $2\to 3$, and $3\to 1$. For example,
$\chi(e_{12})=e_{23}$.
For triples $T_1,\dots,T_m$ (each $T_i=(t_{i,1},t_{i,2},t_{i,3})$)
consider the function

\begin{eqnarray}\label{mapping_triples}
f(T_1,\dots,T_m)=p(t_{1,1}x_1+t_{1,2}\chi(x_1)+t_{1,3}\chi^{-1}(x_1),t_{2,1}x_2+t_{2,2}\chi(x_2)
+t_{2,3}\chi^{-1}(x_2),\cr
\dots,t_{m,1}x_m +t_{m,2}\chi(x_m)+t_{m,3}\chi^{-1}(x_m)).
\end{eqnarray}

Opening the brackets, we   have $3^m$ terms, each of which we
claim is   a diagonal matrix. Each term is a monomial with
coefficient of the type
$$\chi^{k_{\pi(1)}} \chi^{k_{\pi(2)}}\cdots \chi^{k_{\pi(m)}}x_{\pi(1)}x_{\pi(2)}\cdots x_{\pi(m)},$$
where $k_i$ is $-1,0$ or $1$, and $\pi$ is a permutation. Since we
substitute only matrix units in $p$, by Lemma \ref{graph} the
image is either diagonal or a matrix unit with some coefficient.
For each of the three vertices $v_1,v_2,v_3$ in our graph define
the index $\iota _\ell $, for $1 \le \ell \le 3$ to be   the
number of incoming edges to $v_\ell$ minus the number of outgoing
edges from $v_\ell$. Thus, at the outset, when the image is
diagonal, we have $\iota _1 = \iota _2 = \iota _3 = 0.$

We claim that after applying $\chi$ to any matrix unit the new
$\iota'_\ell$ will all still be congruent modulo 3.
Indeed, if the edge $\vec{12}$ is changed to $\vec{23}$, then
$\iota'_1 = \iota +1$ and $\iota_3' = \iota_3+1,$ whereas
$\iota'_2 = \iota_2 -2 \equiv \iota_2 +1.$
 The same with changing
$\vec{23}$ to $\vec{31}$ and $\vec{31}$ to $\vec{12}$. If we make
the opposite change $\vec{21}$ to $\vec{13},$ then (modulo 3) we
subtract $1$ throughout. If we make a change of the type
$\vec{ii}\mapsto \vec{jj}$, then $\iota_\ell ' = \iota_\ell $ for
each $\ell$.

If $p(\chi^{k_1}x_1,\chi^{k_2}x_2,\dots,\chi^{k_m}x_m)=e_{ij}$,
this means that the number of incoming edges minus the number of
outgoing edges   of the vertex $i$ is $-1 \pmod 3$ and the number
of incoming edges minus the number of outgoing edges   of $j$ is
$1\pmod 3$, which are not congruent modulo $3$. Thus the values of
the mapping $f$ defined in \eqref{mapping_triples} are diagonal
matrices. Now fix $3m$ algebraically independent triples
$T_1,\dots,T_m,\Theta_1,\dots,\Theta_m,\Upsilon_1,\dots,\Upsilon_m.$
Assume that $\Image f$ is $2-$dimensional. Then $\Image df$ must
also be  $2-$dimensional at any point. Consider the differential
$df$ at the point $(\Theta_1,T_2,\dots,T_m)$. Thus,
$$f(\Theta_1,T_2,\dots,T_m),\ f(T_1,T_2,\dots,T_m),\
f(\Theta_1,\Theta_2,\dots,T_m)$$ belong to $\Image\ df$. Thus
these three matrices must span a linear space of dimension not
more than $2$. Hence they lie in some plane $P$. Now take
$$f(\Theta_1,\Theta_2,T_3,\dots,T_m),\
f(\Theta_1,T_2,T_3,\dots,T_m),\
f(\Theta_1,\Theta_2,\Theta_3,T_4,\dots,T_m).$$ For the same reason
they lie in a plane, which is
  the   plane $P$ because it has
two vectors from $P$. By the same argument, we conclude that all
the matrices of the type
$f(\Theta_1,\dots,\Theta_k,T_{k+1},\dots,T_m)$ lie in P. Now we
see that $$f(\Theta_1,\dots,\Theta_{m-1},T_m),\
f(\Theta_1,\dots,\Theta_m), \
f(\Upsilon_1,\Theta_2,\dots,\Theta_m)$$ also lie in $P$.
Analogously we obtain that also
$$f(\Upsilon_1,\dots,\Upsilon_k,\Theta_{k+1},\dots,\Theta_m)\in
P$$ for any $k$.

 Hence for $3m$ algebraically independent triples
$$T_1,\dots,T_m;\Theta_1,\dots,\Theta_m;\Upsilon_1,\dots,\Upsilon_m,$$
we have obtained that $f(T_1,\dots,T_M)$,
$f(\Theta_1,\dots,\Theta_m)$ and $f(\Upsilon_1,\dots,\Upsilon_m)$
lie in one plane. Thus any three values of $f$, in particular
$\diag\{\alpha ,\beta ,\gamma\},\ \diag\{\beta ,\gamma,\alpha \}$
and $\diag\{\gamma,\alpha ,\beta \},$ must lie in one plane. We
claim that this can happen only if $$\alpha +\beta +\gamma=0,\quad
\alpha +\beta \varepsilon+\gamma\varepsilon^2=0,\quad
\text{or}\quad \alpha +\beta \varepsilon^2+\gamma\varepsilon=0.$$

Indeed, $\diag\{\alpha ,\beta ,\gamma\},\ \diag\{\beta
,\gamma,\alpha \}$ and $\diag\{\gamma,\alpha ,\beta \},$ are
dependent if and only if the matrix
$$ \left( \begin{matrix} \alpha \ \beta \ \gamma \\ \beta \ \gamma\ \alpha  \\ \gamma\ \alpha \ \beta   \end{matrix}\right)$$
is singular, i.e., its determinant $3\alpha \beta \gamma -(\alpha
^3+\beta ^3+\gamma^3) = 0$. But this has the desired three roots
when viewed as a cubic equation in $\gamma$.

We have a contradiction to our hypothesis.
\end{proof}

\begin{rem}\label{1-codim}
If there exist $\alpha ,\ \beta ,$ and $\gamma$ such that $\alpha
+\beta +\gamma=0$ but $(\alpha ,\beta ,\gamma)$ is not
proportional to $(1,\varepsilon,\varepsilon^2)$ or
$(1,\varepsilon^2,\varepsilon)$,
 with
matrices $ E_1,E_2,\dots,E_m $ such that $p(E_1,E_2,\dots,E_m)$
has eigenvalues $\alpha ,\ \beta $ and $\gamma$, then either all
diagonalizable trace zero matrices lie in $\Image p$, or $\Image
p$ is dense in $M_3(K)$. If $\alpha +\beta
\varepsilon+\gamma\varepsilon^2=0$ but $(\alpha ,\beta ,\gamma)$
is not proportional to $(1,\varepsilon,\varepsilon^2)$ or
$(1,1,1)$, then all diagonalizable matrices with eigenvalues
$\alpha +\beta ,\ \alpha +\beta \varepsilon$ and $\alpha +\beta
\varepsilon^2$ lie in $\Image p$ or $\Image p$ is dense in
$M_3(K)$.
\end{rem}
\begin{rem}\label{char3-classification}
The proof of Theorem \ref{dense} works also for any field $K$ of
characteristic $3$. In this case $\varepsilon=1$. Hence, if there
are $\alpha ,\ \beta ,$ and $\gamma$ in $K$ such that $$\alpha
+\beta +\gamma\ne 0,$$ together with matrix units
$E_1,E_2,\dots,E_m$ such that $p(E_1,E_2,\dots,E_m)$ has
eigenvalues $\alpha ,\ \beta $ and $\gamma$, then $\Image p$ is
dense in $M_3$. Therefore, for $\Char K=3$, any multilinear
polynomial $p$ is either trace-vanishing or $\Image p$ is dense in
$M_3(K).$
\end{rem}

\begin{theorem}\label{equation}
If p is a multilinear polynomial such that $\Image p$ does not
satisfy the equation $\gamma\omega_1(p)^2=\omega_2(p)$ for
$\gamma=0$ or $\gamma=\frac{1}{4}$, then $\Image p$ contains a
matrix with two equal eigenvalues that is not diagonalizable and
of determinant not zero. If $\Image p$ does not satisfy any
equation of the form $\gamma\omega_1(p)^2=\omega_2(p)$ for any
$\gamma$, then the set of non-diagonalizable matrices of $\Image
p$ is Zariski dense in the set of all non-diagonalizable matrices,
and $\Image p$ is dense.
\end{theorem}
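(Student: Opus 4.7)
\begin{proof-idea}
The plan is to produce the desired matrix by perturbing a generic evaluation of $p$ along a one-parameter line inside $\Image p$ and examining when the characteristic polynomial acquires a repeated root. Take generic matrices $Y_1,\dots,Y_m,\tilde Y_1$ and set $a=p(Y_1,\dots,Y_m)$, $b=p(\tilde Y_1,Y_2,\dots,Y_m)$; we may assume $a$ and $b$ are linearly independent. Since $a\in\widetilde{\UD}$, Lemma~\ref{div2} forces its eigenvalues to be pairwise distinct. Consequently the discriminant $\Delta(t)$ of the characteristic polynomial of $M(t):=a+tb$, viewed as a polynomial in the commuting indeterminate $t$, is not identically zero and has at most six roots $t_1,\dots,t_k$; each $M(t_j)\in\Image p$ is a matrix with a double eigenvalue.

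I would then show that some $M(t_j)$ is non-diagonalizable with non-zero determinant, or else $\Image p$ must satisfy $\omega_2(p)=0$ or $\omega_1(p)^{2}=4\omega_2(p)$. Suppose for contradiction that for every $j$ either (A) $\det M(t_j)=0$, or (B) $M(t_j)$ is diagonalizable. In case (A), eigenvalues $(\alpha,\alpha,\beta)$ with $\alpha^{2}\beta=0$ force $\alpha=0$ (yielding $\omega_2=0$) or $\beta=0$ (yielding $\omega_1^{2}=4\omega_2$); by varying the generic arguments and passing to Zariski closure, one of these identities propagates from the double-eigenvalue locus inside $\Image p$ to all of $\Image p$, contradicting the hypothesis. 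In case (B), the matrix $M(t_j)$ satisfies a quadratic minimal polynomial of the form $(x-\alpha)(x-\beta)$. Using Remark~\ref{patch1}, together with the extra polynomial constraint that $(M(t_j)-\alpha)(M(t_j)-\beta)=0$ gives, one isolates a $t_j$ that is rational (in characteristic zero) or $q^{\ell}$-rational (in characteristic $q>0$) in the entries of the generic matrices; hence $M(t_j)$ lies in $\widetilde{\UD}$ (or its purely inseparable extension) and has an eigenvalue of multiplicity exactly $2$, contradicting Lemma~\ref{div2}.

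For the density claim, assume no relation $\gamma\omega_1^{2}=\omega_2$ holds on $\Image p$. By the first part $\Image p$ contains non-diagonalizable matrices with a double eigenvalue and non-zero determinant, and for generic choice of arguments the invariants $(\omega_1,\omega_2)$ of the resulting matrices sweep out a Zariski-dense subset of $K^{2}$. Combined with the fact that the conjugation orbit (Remark~\ref{cong}) of each such Jordan-type matrix is $6$-dimensional and the variety of non-diagonalizable matrices is $8$-dimensional (codimension $1$ in $M_3(K)$), this shows that the non-diagonalizable part of $\Image p$ is Zariski dense in the variety of all non-diagonalizable matrices, and hence $\Image p$ itself is dense in $M_3(K)$. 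The most delicate point is the propagation in case (A) and the uniqueness in case (B): one must leverage Remark~\ref{patch1} tightly enough to force the specialization into $\widetilde{\UD}$, while keeping track of purely inseparable extensions when $\chara K>0$.
\end{proof-idea}
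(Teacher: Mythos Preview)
Your overall architecture---perturb along the line $a+tb$, look at roots of the discriminant, and separate into ``determinant zero'' versus ``diagonalizable'' cases---matches the paper's. But two load-bearing steps in your sketch do not actually go through as stated.

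\textbf{The diagonalizable case.} You want to use Remark~\ref{patch1} to say that a $t_j$ with $M(t_j)$ diagonalizable is uniquely determined and hence rational. But nothing in your setup forces uniqueness: several roots of $\Delta(t)$ could give diagonalizable matrices, and then Remark~\ref{patch1} is unavailable. The paper handles this with a geometric observation you are missing: if two roots $t_1\ne t_2$ both give diagonalizable matrices with a double eigenvalue, then each $M(t_i)$ has a $2$-dimensional eigenspace, and two $2$-planes in $K^3$ must intersect, producing a \emph{common} eigenvector of $a$ and $b$. Only then does a uniqueness argument (on the common eigenspace of $a$ and $b$) feed into Remark~\ref{patch1} and contradict Lemma~\ref{div2}. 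Without this intersection step, your case~(B) has a gap.

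\textbf{The propagation in the determinant-zero case.} You assert that an identity like $\omega_2=0$ or $\omega_1^2=4\omega_2$, once it holds on the double-eigenvalue locus inside $\Image p$, ``propagates by Zariski closure'' to all of $\Image p$. That is not a valid inference: the double-eigenvalue locus is a proper closed subset, and an equation holding there says nothing about the generic point. The paper's mechanism is entirely different and much sharper. First one shows (using the diagonalizable case already disposed of, plus another uniqueness argument) that \emph{all} discriminant-zero points on the line are of the \emph{same} Jordan type, say conjugate to $\diag\{\lambda,\lambda,0\}+e_{12}$. Then one observes that the cubic $\det(a+tb)=0$ has three \emph{pairwise distinct} roots, and at each of them the relation $\omega_1(a+tb)^2-4\omega_2(a+tb)=0$ holds. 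But the left side is a polynomial of degree at most~$2$ in $t$; having three roots forces it to vanish identically in $t$, and setting $t=0$ gives $\omega_1(a)^2=4\omega_2(a)$ for the \emph{generic} evaluation $a=p(Y_1,\dots,Y_m)$. This ``three roots of a quadratic'' counting trick is the heart of the argument, and your sketch does not contain it.

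The density paragraph is also too vague compared to the paper, which analyzes the ratio $\mu\lambda^{-1}$ on the line and, when it is a constant $\delta$, passes to the auxiliary polynomial $q=p-\tfrac{\tr p}{\delta+2}$ to force a relation $\gamma\omega_1(p)^2=\omega_2(p)$; you should expect to need a comparable device rather than a dimension count alone.
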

\begin{proof}
If not, then  by \cite[Lemma~2]{BMR2} there is at least one
variable (say, $x_1$) such that $a = p(x_1,x_2,\dots,x_m)$ does
not commute with $b = p(\tilde x_1,x_2,\dots,x_m)$. Consider the
matrix
 $a+tb=p(x_1+t\tilde x_1,x_2,\dots,x_m)$, viewed as a polynomial in $t$.

Recall that the discriminant of a $3\times 3$ matrix with
eigenvalues $\la _1, \la_2, \la _3$ is defined as $\prod _{1 \le i
< j \le 3}(\la_i - \la _j)^2$. Thus, the discriminant of $a+tb$ is
a polynomial $f(t)$ of degree~$6$.
 If $f(t)$ has only one root $t_0$, then this
root is  defined in terms of the entries of $\tilde
x_1,x_1,x_2,\dots,x_m$, and invariant under the action of the
symmetric group,  and thus is in Amitsur's division algebra
$\widetilde{\UD}$. By Lemma~\ref{divAm}, $a+t_0b$ is scalar, and
the uniqueness of $t_0$ implies that $a$ and $b$ are scalar,
contrary to assumption.

Thus, $f(t)$ has at
least two roots - say, $t_1\neq t_2$, and
 the matrices $a+t_1b$ and $a+t_2b$ each must have multiple
eigenvalues. If both of these matrices are diagonalizable, then
each of $a+t_ib$ have a $2-$dimensional plane of eigenvectors.
Therefore we have two $2-$dimensional planes in $3-$dimensional
linear space, which must intersect. Hence there is a common
eigenvector of both $a+t_ib$ and this is a common eigenvector of
$a$ and $b$. If $a$ and $b$ have a common eigenspace of dimension
1 or 2, then there is at least one eigenvector (and thus
eigenvalue) of $a$ that is uniquely defined, implying $a \in
\widetilde{UD}$ by Remark~\ref{patch1}, contradicting
Lemma~\ref{divAm}. If $a$ and $b$ have a common eigenspace of
dimension 3, then $a$ and $b$ commute, a contradiction.

We claim that there cannot be a diagonalizable matrix with equal
eigenvalues on the line $a+tb$. Indeed, if there were   such a
matrix, then either it would be unique (and thus an element of
$\widetilde {UD}$, which cannot happen), or there would be at least two such
diagonalizable matrices, which also cannot happen, as shown above.

Assume  that all   matrices on the line $a+tb$ of discriminant
zero have determinant zero. Then either all of them are of the
type $\diag\{\lambda,\lambda,0\}+e_{12}$ or all of them are of the
type $\diag\{0,0,\mu\}+e_{12}$. (Indeed, there are three roots of
the determinant equation $\det(a+tb)=0$, which are pairwise
distinct, and all of them give a matrix with two equal
eigenvalues,   all belonging to one of these types, since
otherwise one eigenvalue is uniquely defined and thus yields an
element of $\widetilde {UD}$, which cannot happen.

In the first case, all three roots of the determinant equation
$\det(a+tb)=0$ satisfy  the equation
$(\omega_1(a+tb))^2=4\omega_2(a+tb)$. Hence, we have three
pairwise distinct roots of the polynomial of maximal degree $2$,
which can occur only if the polynomial is identically zero. It
follows that   also $(\omega_1(a))^2-4\omega_2(a)=0$,
 so $(\omega_1(p))^2-4\omega_2(p)=0$ is
identically zero, which by hypothesis cannot happen.

 In the second case we have
 the analogous situation, but $\omega_2(p)$ will be
identically zero, a  contradiction.

Thus on the line $a+tb$ we have at least one matrix of the type
$\diag\{\lambda,\lambda,\mu\}+e_{12}$ and $\lambda\mu\neq 0$.
Consider the algebraic expression $\mu \lambda^{-1}$.  If not
constant, then it takes on almost all values, so assume that it is a
constant $\delta$. Then $\delta\ne -2$, since otherwise this
matrix will be the unique matrix of trace 0 on the line $a+tb$ and
thus an element of $\widetilde {UD}$, contrary to
Lemmas~\ref{divAm} and \ref{div2}. Consider the polynomial
$q=p-\frac{\tr p}{\delta+2}$. At the same point $t$ it takes on the
value $\diag\{0,0,(\delta-1)\lambda\}+e_{12}$. Hence all three
pairwise distinct roots of the equation $\det q(x_1+t\tilde
x_1,x_2,\dots,x_m)=0$ will give us a matrix of the form
$\diag\{0,0,*\}+e_{12}$ (otherwise we have uniqueness and thus an
element of $\widetilde {UD}$), contradicting Lemma~\ref{div2}. Therefore $q$
satisfies an equation $\omega_2(q)=0$. Hence, $p$ satisfies an
equation $\omega_1(p)^2-c\omega_2(p)=0$, for some constant
 $c,$
a contradiction. Hence almost all non-diagonalizable matrices
belong to the image of $p$, and they are almost all matrices of
discriminant $0$ (a subvariety of $M_3(K)$ of codimension $1$). By
Amitsur's Theorem, $\Image p$ cannot be a subset of the
discriminant surface. Thus, $\Image p$ is dense in $M_3(K)$.
\end{proof}
\begin{rem}\label{rem_sl3}
Note that if $\omega_1(p)$ is identically zero, and $\omega_2 (p)$
is not identically zero, then $\Image p$  contains a matrix
similar to $\diag\{1,1,-2\}+e_{12}$. Hence $\Image p$ contains all
diagonalizable trace zero matrices (perhaps with the exception of
the diagonalizable matrices of discriminant $0$, i.e. matrices
similar to $\diag\{c,c,-2c\}$), all non-diagonalizable
non-nilpotent trace zero matrices, and all matrices $N$ for which
$N^2=0$. Nilpotent matrices of order $3$ also belong to the image
of $p$, as we shall see in Lemma \ref{V3inimage}.
\end{rem}

\section{Proof of the main Theorem}
\begin{lemma}\label{V3} A matrix is 3-scalar iff its eigenvalues are in   $\{\gamma, \gamma\varepsilon, \gamma\varepsilon^2: \gamma \in K\},$
where $\gamma^3\in K$ is its determinant. The variety $V_3$ of
3-scalar matrices has dimension 7.
\end{lemma}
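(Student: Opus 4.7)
The plan is to split into the biconditional and the dimension count. For the biconditional, I would expand the characteristic polynomial $(x-\gamma)(x-\gamma\varepsilon)(x-\gamma\varepsilon^2)$: the elementary symmetric functions satisfy $\gamma(1+\varepsilon+\varepsilon^2)=0$ and $\gamma^2(\varepsilon+\varepsilon^2+\varepsilon^3)=\gamma^2(1+\varepsilon+\varepsilon^2)=0$, while the product equals $\gamma^3\varepsilon^3 = \gamma^3$. Hence a $3$-scalar matrix has characteristic polynomial $x^3-\gamma^3$, so $\omega_1=\omega_2=0$ and $\det=\gamma^3$. Conversely, since $K$ is algebraically closed, any matrix with characteristic polynomial $x^3-c$ has eigenvalues $\gamma,\gamma\varepsilon,\gamma\varepsilon^2$ for any $\gamma$ with $\gamma^3=c$. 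In characteristic~$3$ the convention $\varepsilon=1$ is consistent, since there $(x-\gamma)^3 = x^3-\gamma^3$.

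This rewrites the variety as
\[
V_3 = \{A \in M_3(K) : \omega_1(A)=\omega_2(A)=0\},
\]
a subvariety of $M_3(K)\cong\mathbb{A}^9$ cut out by two polynomials, which sets up the dimension computation.

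For the dimension I would use Krull's Hauptidealsatz for the upper bound and an orbit parameterization for the lower bound. The linear equation $\omega_1=0$ cuts out $\sl_3(K)$ of dimension~$8$, and $\omega_2$ is not identically zero on $\sl_3$ (e.g.\ $\omega_2(\diag(1,-1,0))=-1$), so its zero locus in $\sl_3(K)$ has pure codimension~$1$, giving $\dim V_3 \le 7$. For the matching lower bound I would use the morphism $\Phi : \GL_3(K) \times \mathbb{A}^1 \to M_3(K)$ sending $(g,\gamma)\mapsto g\,\diag(\gamma,\gamma\varepsilon,\gamma\varepsilon^2)\,g^{-1}$, whose image lies in $V_3$; for generic $\gamma$ the matrix $\diag(\gamma,\gamma\varepsilon,\gamma\varepsilon^2)$ is regular semisimple with centralizer a maximal torus of dimension~$3$, so the image has dimension $9+1-3=7$.

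The only genuinely delicate point is characteristic~$3$, where $\varepsilon=1$ and the regular semisimple parameterization collapses (every $\diag(\gamma,\gamma,\gamma)$ is a scalar). There I would replace the lower-bound argument by the observation that $V_3 = \{\gamma I + N : \gamma\in K,\ N\in\mathcal{N}\}$, where $\mathcal{N}$ is the nilpotent cone in $M_3(K)$ of dimension $n^2-n=6$; the scalar shift adds one more parameter and again yields $\dim V_3 = 7$, so the conclusion is uniform across all characteristics.
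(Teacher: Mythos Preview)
Your proof is correct and follows essentially the same approach as the paper: both compute the characteristic polynomial as $x^3-\gamma^3$ and then obtain $\dim V_3=7$ by realizing $V_3$ as the zero locus of two invariant equations in the $9$-dimensional space $M_3(K)$. The paper's version is much terser---it uses $\tr(x)=0$ and $\tr(x^{-1})=0$ on the dense open set of invertible matrices (equivalent to your $\omega_2=0$ there, since $\tr(x^{-1})=\omega_2(x)/\omega_3(x)$ for $3\times 3$ matrices) and simply asserts codimension~$2$, without your explicit upper/lower bound split or your separate treatment of characteristic~$3$.
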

\begin{proof} The first assertion is immediate since the
characteristic polynomial is $x^3 -\gamma^3.$ Hence ${V_3}$ is a
variety. The second assertion follows since the invertible
elements of $V_3$ are defined by two equations: $\tr (x)=0$ and
$\tr (x^{-1})=0$ and thus a $V_3$ is a variety of codimension~$2$.
\end{proof}
\begin{lemma}\label{V3inimage}
Assume $\Char K\neq 3$. If $p$ is neither PI nor central, then the
variety $V_3$ is contained in $\Image p$.
\end{lemma}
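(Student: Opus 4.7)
By Remark~\ref{cong} the image $\Image p$ is $GL_3(K)$-conjugation-invariant, and it suffices to exhibit one representative of each $GL_3$-orbit inside $V_3$: namely $\{0\}$, the orbit of $e_{12}$ (rank-one nilpotents, with $N^2=0$), the orbit of $e_{12}+e_{23}$ (rank-two nilpotents, with $N^3=0$ but $N^2\neq 0$), and, for each $\gamma\in K^*$, the orbit of $\diag\{\gamma,\gamma\varepsilon,\gamma\varepsilon^2\}$. Two of these are immediate: $p(0,x_2,\ldots,x_m)=0$ by multilinearity, and $e_{12}\in\Image p$ by Lemma~\ref{linear}.

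The remaining two orbit types will both be produced by the following device. For any choice of $x_2,\ldots,x_m\in M_3(K)$ the map $L_{x_2,\ldots,x_m}\colon x_1\mapsto p(x_1,x_2,\ldots,x_m)$ is $K$-linear, so its image is a linear subspace of $M_3(K)$, and $\Image p$ is the union of these subspaces as the tail varies. The plan is to locate a tuple $(x_2,\ldots,x_m)$ for which $\Image L_{x_2,\ldots,x_m}$ meets each of the three off-diagonal lines $Ke_{12}$, $Ke_{23}$, $Ke_{31}$ nontrivially. Given such a tuple, linearity in $x_1$ yields both $e_{12}+e_{23}\in\Image p$ (disposing of the $N_3$-orbit by conjugation) and
\[
C:=e_{12}+e_{23}+e_{31}\in\Image p.
\]
Since $C$ is the cyclic permutation matrix with $C^3=I$, it is $3$-scalar with $\gamma=1$; the multilinear scaling $p(tx_1,x_2,\ldots)=tp(x_1,\ldots)$ then delivers every $\gamma\in K^{*}$, and conjugation sweeps out the remaining diagonalizable orbits.

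The hard part is establishing the existence of such a tuple, and this is where the main technical work of the proof lies. My plan is to proceed by contradiction: if for every tuple $(x_2,\ldots,x_m)$ the image of $L_{x_2,\ldots,x_m}$ missed at least one of the three off-diagonal lines, then $\Image p$ would be contained in a $GL_3$-invariant proper subvariety of $M_3(K)$, forcing a nontrivial identity on the characteristic coefficients of $p$ that contradicts the non-centrality hypothesis. Concretely, one refines the graph-theoretic analysis of Lemma~\ref{graph}, tracking the oriented paths on the complete digraph on $\{1,2,3\}$ induced by each monomial of $p$, and exploits the cyclic symmetry $\chi$ introduced in Theorem~\ref{dense} to rotate tails through the three off-diagonal pairs $\{(1,2),(2,3),(3,1)\}$, thereby producing the required simultaneous realization from a single choice of non-matrix-unit perturbations of a fixed matrix-unit tail. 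This combinatorial verification---upgrading Lemma~\ref{linear}'s independent construction of the individual $e_{ij}$ to a simultaneous construction sharing $x_2,\ldots,x_m$---is the principal obstacle.
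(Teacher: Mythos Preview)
Your reduction to orbit representatives is correct and matches the paper's setup, but the heart of your argument is missing. You yourself flag the existence of a single tail $(x_2,\dots,x_m)$ whose linear image meets each of $Ke_{12}$, $Ke_{23}$, $Ke_{31}$ as ``the principal obstacle,'' and then you do not overcome it: the contradiction sketch (``$\Image p$ would be contained in a $\GL_3$-invariant proper subvariety'') does not follow, since for different tails the missing line can be different, and the union of such linear images need not sit inside any proper invariant subvariety. As written this is a plan, not a proof.

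It is also worth noting that the paper does \emph{not} establish your single-tail claim; it uses a genuinely different mechanism. Starting from matrix units $E_1,\dots,E_m$ with $p(E_1,\dots,E_m)=e_{12}$, the paper substitutes into \emph{every} slot the three-parameter family $t_{1,i}E_i+t_{2,i}\chi(E_i)+t_{3,i}\chi^2(E_i)$, obtaining a map $f(T_1,\dots,T_m)$ whose image lies in $L=Ke_{12}\oplus Ke_{23}\oplus Ke_{31}$ and contains $e_{12},e_{23},e_{31}$ at three different corner points. This gives only that $\Image f$ is Zariski dense in $L$, hence contains some $\alpha e_{12}+\beta e_{23}+\gamma e_{31}$ with $\alpha\beta\gamma\ne 0$, which handles the invertible $3$-scalar orbits. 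Crucially, density does \emph{not} hand you the non-generic point $e_{12}+e_{23}$; the paper treats the order-$3$ nilpotent separately by writing $f=q\,e_{12}+r\,e_{23}+s\,e_{31}$ with $q,r,s$ scalar multilinear polynomials in the $T_i$, assuming no order-$3$ nilpotent occurs, and deriving a contradiction via a gcd/divisibility argument among $q,r,s$. If your single-tail claim were true it would neatly bypass this second step, but you would need to actually prove it; absent that, the paper's two-stage argument (density for the invertible orbits, factorization for $e_{12}+e_{23}$) is what closes the gap.
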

\begin{proof}
According to Lemma ~\ref{linear} there exist matrix units
$E_1,E_2,\dots,E_m$ such that $p(E_1,E_2,\dots,E_m)=e_{1,2}$.
Consider the mapping $\chi$ described in the proof of Theorem
\ref{dense}. For any triples $T_i=(t_{1,i},t_{2,i},t_{3,i})$, let
$$f(T_1,T_2,\dots,T_m)=p(\dots,t_{1,i}E_i+t_{2,i}\chi(E_i)+t_{3,i}\chi^2(E_i),\dots).$$
 $\Image f$ (a subset of $\Image p$) is a subset
of the  $3-$dimensional linear space
$$L=\{\alpha e_{12}+\beta e_{23}+\gamma e_{31},\ \alpha ,\beta ,\gamma\in
K\}.$$ Since $e_{12}$, $e_{23}$ and $e_{31}$ belong to $\Image f$,
we see that $\Image f$ is dense in $L$, and hence at least one
matrix $a = \alpha e_{12}+\beta e_{23}+\gamma e_{31}$ for
$\alpha\beta \gamma\neq 0$ belongs to $\Image p$. Note that this
matrix is $3-$central. Thus the variety ${V_3}$,  excluding the
nilpotent matrices, is contained in $\Image p$. The nilpotent
matrices of order $2$ also belong to the image of $p$ since they
are similar to $e_{12}$.

Let us show that all nilpotent matrices of order $3$ (i.e.,
matrices similar to $e_{12}+e_{23}$), also belong to $\Image p$.
We have the multilinear polynomial
$$f(T_1,T_2,\dots,T_m)=q(T_1,T_2,\dots,T_m)e_{12}+
r(T_1,T_2,\dots,T_m)e_{23}+s(T_1,T_2,\dots,T_m)e_{31},$$ therefore
$q,r$ and $s$ are three scalar multilinear polynomials. Assume
there is no nilpotent matrix of order $3$ in $\Image p$. Then we
have the following: if $q=0$ then either $rs=0$, if $r=0$ then
$sq=0$, and if $s=0$ then $qr=0.$ Assume $q_1$ is the greatest
common divisor of $q$ and $r$ and $q_2=\frac{q}{q_1}$. Note both
$q_i$ are multilinear polynomials defined on disjoint sets of
variables. If $q_1=0$ then $r=0$ and if $q_2=0$ then $s=0$. Note
there are no double efficients, and thus $r=q_1r'$ is a multiple
of $q_1$ and $s=q_2s'$ is a multiple of $q_2$. The polynomial $r'$
cannot have common devisors with $q_2$, therefore if we consider
any generic point $(T_1,\dots,T_m)$ on the surface $r'=0$ then
$r(T_1,\dots,T_m)=0$ and $q(T_1,\dots,T_m)\neq 0$. Hence
$s(T_1,\dots,T_m)=0$ for any generic $(T_1,\dots,T_m)$ from the
surface $r'=0.$ Therefore $r'$ is the divisor of $s$. Remind both
$q_1$ and $q_2$ are multilinear polynomials defined on disjoint
subsets of $\{T_1,T_2,\dots,T_m\}$. Without loss of generality
$q_1=q_1(T_1,\dots,T_k)$, and $q_2=q_2(T_{k+1},\dots,T_m)$.
Therefore $r'=r'(T_{k+1},\dots,T_m)$ and it is divisor of $s$.
Also remind $s=s'q_2$ so $q_2(T_{k+1},\dots,T_m)$ is also divisor
of $s$. Hence $r'=cq_2$ where $c$ is constant. Thus
$r=q_1r'=cq_1q_2=cq$. However there exist $(T_{k+1},\dots,T_m)$
such that $q=0$ and $r=1$ (i.e. such that
$f(T_{k+1},\dots,T_m)=e_{23}$). A contradiction.
\end{proof}
\begin{rem}
When $\Char K=3$,   then $V_3$ is the space of the matrices with
equal eigenvalues (including also scalar matrices). The same proof
shows that all nilpotent matrices belong to the image of $p$, as
well as all matrices similar to $cI+e_{12}+e_{23}$. But we do not
know how to show that scalar matrices and matrices similar to
$cI+e_{12}$ belong to the image of $p$.
\end{rem}
\begin{proof-of-thm}{\ref{main}}
First assume that $\Char K\ne 3$. According to Lemma
\ref{V3inimage} the variety ${V_3}$ is contained in $\Image p$.
Therefore $\Image p$ is either the set of $3$-scalar matrices, or
some $8-$dimensional variety (with $3$-scalar subvariety), or is
$9-$dimensional (and thus dense).

It remains  to classify the possible $8-$dimensional images. Let
us consider all matrices $p(E_1,\dots,E_m)$ where $E_i$ are matrix
units. If all such matrices have trace~0, then $\Image p$ is dense
in $\sl_3(K)$, by Theorem \ref{multi_tr=0_3}. Therefore we may
assume that at least one such matrix $a$ has eigenvalues $\alpha
,\ \beta $ and $\gamma$ such that $\alpha +\beta +\gamma\neq 0.$
By Theorem \ref{dense} we cannot have $\alpha +\beta +\gamma,\
\alpha +\beta \varepsilon+\gamma\varepsilon^2$ and $\alpha +\beta
\varepsilon^2+\gamma\varepsilon$ all nonzero. Hence $a$ either is
scalar, or a linear combination (with nonzero coefficients) of a
scalar matrix and $\diag\{1,\varepsilon,\epsilon^2\}$ (or with
$\diag\{1,\varepsilon^2,\epsilon\}$, without loss of generality -
with $\diag\{1,\varepsilon,\varepsilon^2\}$). By Theorem
\ref{equation}, if $\Image p$ is not dense, then $p$ satisfies an
equation of the type $(\tr(p))^2=\gamma\tr(p^2)$ for some
$\gamma\in K$. Therefore, if a scalar matrix belongs to $\Image
p$, then $\gamma=\frac{1}{3}$ and $\Image p$ is the set of
$3-$scalar plus scalar matrices. If the matrix $a$ is not scalar,
then it is a linear combination of a scalar matrix and
$\diag\{1,\varepsilon,\varepsilon^2\}$. Hence, by Remark
\ref{1-codim}, $\Image p$ is also the set of $3-$scalar plus
scalar matrices. In any case, we have shown that $\Image p$ is
either $\{0\}$, $K$, the set of $3-$scalar matrices, the set of
$3-$scalar plus scalar matrices (matrices with eigenvalues
$(\alpha +\beta ,\alpha +\beta \varepsilon,\alpha +\beta
\varepsilon^2)$), $\sl_3(K)$ (perhaps lacking nilpotent matrices
of order $3$), or is dense in $M_3(K)$.

If $\Char K=3$, then by Remark \ref{char3-classification} the multilinear
polynomial $p$ is either trace-vanishing or $\Image p$ is dense in
$M_3(K).$ If $p$ is trace-vanishing, then by Theorem~\ref{multi_tr=0_3},
$\Image p$ is one of the following:
\{0\}, the set of scalar matrices,
the set of $3-$scalar matrices, or
for each triple
$\lambda_1+\lambda_2+\lambda_3=0$ there exists a matrix $M\in\Image
p$ with eigenvalues $\lambda_1,\ \lambda_2$ and $\lambda_3$.
\end{proof-of-thm}
\section{Open problems}

\begin{prm}\label{3s}
Does there actually exist a multilinear polynomial whose  image
evaluated on $3\times3$ matrices consists of $3-$scalar matrices?
\end{prm}

\begin{prm}\label{s3s}
Does there actually exist a multilinear polynomial whose  image
evaluated on $3\times3$ matrices   is the set of scalars plus
$3-$scalar matrices?
\end{prm}

\begin{rem}
Problems \ref{3s} and \ref{s3s}  both have the same answer.   If
they both have affirmative answers,   such a polynomial would a
counter-example to   Kaplansky's problem.
\end{rem}

    \begin{prm}
Is it possible that the image of a multilinear polynomial
evaluated on $3\times3$ matrices is dense but not all of $M_3(K)$?
\end{prm}
 \begin{prm}
Is it possible that the image of a multilinear polynomial
evaluated on $3\times3$ matrices is the set of all trace-vanishing matrices without discriminant vanishing diagonalazable matrices?
\end{prm}

%

\end{document}